\def\@cite#1#2{[{{\bfseries #1}\if@tempswa , #2\fi}]}
\renewcommand{\section}{%
\@startsection{section}{1}{\z@}
{0.5truecm plus -1ex minus -.2ex}%
{1.0ex plus .2ex}{\bfseries\large}}
\def\@seccntformat#1{\csname the#1\endcsname.\ }
\numberwithin{equation}{section} 
\newtheorem{thm}{Theorem}[section]
\newtheorem{corollary}[thm]{Corollary}
\newtheorem{lem}[thm]{Lemma}
\theoremstyle{definition}
\newtheorem{remark}{Remark}[section]
\newcommand{\ep}{\varepsilon}
\newcommand{\pa}{\partial}
\newcommand{\Rn}{\mathbb{R}^n}
\newcommand{\ol}[1]{\overline{#1}}
\newcommand{\tmax}{T_{\rm max,\lambda}}
\newcommand{\lp}[2]{\left\Vert{#2}\right\Vert_{L^{#1}(\Omega)}}
\newcommand{\wmp}[2]{\Vert{#2}\Vert_{W^{#1}(\Omega)}}
\newcommand{\cd}{(\cdot,t)}
\newcommand{\into}{\int_\Omega}
\newcommand{\obar}{\overline{\Omega}}
\newcommand{\utau}{u_\lambda}
\newcommand{\vtau}{v_\lambda}
\newcommand{\lambdan}{\lambda_n}
\newcommand{\lambdanj}{\lambda_{n_j}}
\newcommand{\utaunj}{u_{\lambda_{n_j}}}
\newcommand{\vtaunj}{v_{\lambda_{n_j}}}
\newcommand{\lambdaz}{\lambda_0}
\newcommand{\itau}{\lambda \in (0,\lambdaz)}
\newcommand{\uinit}{u_{\rm init}}
\newcommand{\vinit}{v_{\rm init}}
\newcommand{\uobar}{\overline{u}}
\newcommand{\vobar}{\overline{v}}
\newcommand{\tlam}{T_\lambda}
\newcommand{\wlam}{W_{\lambda}}
\begin{document}
\footnote[0]
    {2010{\it Mathematics Subject Classification}\/. 
    Primary: 35A09; Secondary:  35K51, 35J15, 92C17.
    }
\footnote[0]
    {{\it Key words and phrases}\/: 
     Keller--Segel system; parabolic-parabolic system; parabolic-elliptic system.  
    }
\begin{center}
    \large{{\bf The fast signal diffusion limit in a Keller--Segel system}} 
\end{center}
\vspace{5pt}
\begin{center}
    Masaaki Mizukami\footnote{Partially supported by 
    JSPS Research Fellowships for Young Scientists (No.\ 17J00101).} 
   \footnote[0]{
    E-mail: 
    {\tt masaaki.mizukami.math@gmail.com} 
    }\\
    \vspace{12pt}
    \small{Department of Mathematics, 
    Tokyo University of Science\\
    1-3, Kagurazaka, Shinjuku-ku, 
    Tokyo 162-8601, Japan}\\
    \vspace{2pt}
\end{center}
\begin{center}    
    \small \today
\end{center}
\vspace{2pt}
\newenvironment{summary}
{\vspace{.5\baselineskip}\begin{list}{}{%
     \setlength{\baselineskip}{0.85\baselineskip}
     \setlength{\topsep}{0pt}
     \setlength{\leftmargin}{12mm}
     \setlength{\rightmargin}{12mm}
     \setlength{\listparindent}{0mm}
     \setlength{\itemindent}{\listparindent}
     \setlength{\parsep}{0pt}
     \item\relax}}{\end{list}\vspace{.5\baselineskip}}
\begin{summary}
{\footnotesize {\bf Abstract.}
This paper deals with 
convergence of a solution for 
the parabolic-parabolic Keller--Segel system 
\begin{align*}
\begin{cases}
  (u_\lambda)_t = \Delta u_\lambda - \chi \nabla \cdot (u_\lambda \nabla v_\lambda) & 
  \mbox{in} \ \Omega \times (0,\infty),
\\
  \lambda (v_\lambda)_t = \Delta v_\lambda - v_\lambda +u_\lambda  
 &\mbox{in} \ \Omega\times (0,\infty)
\end{cases}
\end{align*}
to that for the parabolic-elliptic Keller--Segel system 
\begin{align*}
\begin{cases}
 u_t = \Delta u - \chi \nabla \cdot (u \nabla v)&
 \mbox{in} \ \Omega \times (0,\infty),
\\
 0= \Delta v -v +u 
&
 \mbox{in} \ \Omega\times (0,\infty) 
\end{cases}
\end{align*}
as $\lambda \searrow 0$, 
where $\Omega$ is a bounded domain in $\mathbb{R}^n$ ($n\ge 2$) with smooth boundary, 
$\chi, \lambda>0$ are constants. 
In chemotaxis systems 
parabolic-elliptic systems often provided some guide to 
methods and results for 
parabolic-parabolic systems. 
However, 
there have not been rich results on the relation 
between parabolic-elliptic systems and parabolic-parabolic systems. 
Namely, it still remains to analyze on the following question except some cases: 
{\it Does a solution of the parabolic-parabolic system converge to 
that of the parabolic-elliptic system as $\lambda \searrow 0$?} 
In the case that $\Omega$ is the whole space $\mathbb{R}^n$, or 
$\Omega$ is a bounded domain and $\chi$ is a strong signal sensitivity, 
some positive answer was shown 
in the author's previous paper (Math.\ Nachr., to appear). 
Therefore one can expect a positive answer to this question also in the Keller--Segel system in a bounded domain $\Omega$ in some cases.
This paper gives some positive answer in 
the 2-dimensional and the higher-dimensional Keller--Segel system. 
}
\end{summary}
\vspace{10pt} 

\newpage
%
%

\section{Introduction}

The subject of this work is to construct a new approach to 
a parabolic-elliptic Keller--Segel system from its parabolic-parabolic case, 
and to use the parabolic-parabolic case as a step to establish new results 
in the parabolic-elliptic case.  
In this paper our aim is, by considering that the parabolic-elliptic system is 
as a limit of its parabolic-parabolic case, to establish a result such that  
only dealing with the parabolic-parabolic Keller--Segel system is enough to 
obtain new properties for solutions of its parabolic-elliptic case. 
As a related work, in the study of a chemotaxis system with 
signal-dependent sensitivity, some result on this subject 
has already been obtained (\cite{Mizukami_fast_sensi}); 
however, in this study we could not 
attain a result on a  
minimal Keller--Segel system 
from a technical reason. 
Thus the subject of this paper is a challenging problem for 
a progress of the chemotaxis system.

\smallskip
Before an introduction of a problem in this paper, we will recall some related works 
on the chemotaxis system. Here chemotaxis is the property such that 
species move towards higher concentration of a chemical substance 
when they plunge into hunger. 
Keller--Segel \cite{K-S,K-S_2} studied the migration of the species which have chemotaxis, and proposed the following problem: 
\[ 
  u_t = \Delta u - \nabla \cdot (u\chi(v)\nabla v), 
\quad 
  \lambda v_t =  \Delta v - v +u 
\quad \mbox{in} \ \Omega \times (0,\infty), 
\] 
where $\Omega\subset \mathbb{R}^n$ ($n\in \mathbb{N}$) is a bounded domain, 
$\lambda =0$ (the parabolic-elliptic system) or $\lambda>0$ (the parabolic-parabolic system) is a constant and  
$\chi$ is a function. 
This problem is called a {\it chemotaxis system}, and especially, 
is called a (minimal) {\it Keller--Segel system} in the case that $\chi$ is a constant function. 
About the Keller--Segel system, Nanjundiah \cite{Nanjundiah_1973} first asserted that 
we could expect existence of a blow-up solution to the Keller--Segel system. 
Moreover, Childress--Percus \cite{Childress-Percus_81} claimed 
the following conjecture: 
\begin{itemize}
\setlength{\itemsep}{0cm}
\item In the $1$-dimensional setting, global existence holds. 
\item In the $2$-dimensional setting, there is a critical number $c$ 
such that  if an initial data $\uinit$ satisfies $\lp{1}{\uinit} < c$ 
then global existence holds, and for any $m>c$ there are initial data $\uinit,\vinit$
such that $\lp{1}{\uinit} = m$ and the corresponding solution blows up in finite time. 
\item In the higher-dimensional setting, there are many blow-up solutions. 
\end{itemize} 
Here we first focus on the 2-dimensional setting. 
The study of the 2-dimensional Keller--Segel system is supported by 
the interaction between the parabolic-elliptic case and 
the parabolic-parabolic case. 
In order to verify the Childress--Percus conjecture Nagai \cite{Nagai_1995} tried to deal with 
the parabolic-elliptic case which is a simplified problem of 
the parabolic-parabolic Keller--Segel system, 
and shown that, in the radial setting, $8\pi$ is the critical value 
in the Childress--Percus conjecture. 
Subsequently, Nagai--Senba--Yoshida \cite{Nagai-Senba-Yoshida} established  
global existence and boundedness of {\it radial} solutions 
in the parabolic-parabolic Keller--Segel system 
under the condition that $\lp{1}{\uinit} < 8\pi$, 
and also obtained existence of global bounded {\it nonradial} solutions to 
the parabolic-parabolic system 
under the condition that $\lp{1}{\uinit}< 4\pi$. 
Here Senba--Suzuki \cite{Senba-Suzuki_Kokyuroku} asserted that arguments in 
proofs of these results could also be applied to the parabolic-elliptic case, 
which meant that global existence and boundedness of solutions to 
the parabolic-elliptic system were shown 
under the condition that $\lp{1}{\uinit} < 4\pi$. 
Therefore in the both cases of the parabolic-elliptic system and 
the parabolic-parabolic system, 
$8\pi$ is the critical value in the Childress--Percus conjecture in the radial case, 
and $4\pi$ is the critical value in the nonradial case. 
Indeed, existence of blow-up solutions such that 
$\lp{1}{\uinit}$ is larger than the critical value was shown 
(\cite{Herrero-Velazques_1997,Horstmann-Wang,Mizoguchi-Winkler,Nagai_1995,Nagai_2001}): 
The radial parabolic-elliptic case was treated by a combination of the results in \cite{Herrero-Velazques_1997,Nagai_1995};  
the nonradial parabolic-elliptic case is in \cite{Nagai_2001}; 
the radial parabolic-parabolic case can be found in \cite{Mizoguchi-Winkler};  
the nonradial parabolic-parabolic case is in \cite{Horstmann-Wang}. 
Moreover, related works which deal with 
blow-up asymptotics of solutions to the parabolic-elliptic case 
can be found in \cite{Herrero-Velazques_1997,Senba_2007,Senba-Suzuki_2001} and 
to the parabolic-parabolic case are in \cite{Mizoguchi-Souplet_2014,Nagai-Senba-Suzuki_2000}. 
In summary, 
in the 2-dimensional setting, 
the study of the Keller--Segel system 
was developed by the interaction between the parabolic-elliptic system and the
parabolic-parabolic system,  
and it is shown that the Childress--Percus conjecture is true. 
On the other hand, the other dimensional cases have also been studied 
only in the parabolic-parabolic system, 
and it is shown that the Childress--Percus conjecture is valid also 
in the other dimensional cases; 
in the 1-dimensional setting Osaki--Yagi \cite{Osaki-Yagi} showed 
global existence and boundedness of classical solutions; 
in the higher-dimensional case Winkler \cite{Winkler_2013_blowup} obtained that 
for all $m>0$ there are initial data $\uinit,\vinit$ such that $\lp{1}{\uinit} = m$ and 
the corresponding solution blows up in finite time. 
Here global existence of bounded solutions to the higher-dimensional 
parabolic-parabolic Keller--Segel system also holds under some smallness condition 
for initial data $\uinit,\vinit$ with respect to some Lebesgue norm; 
Winkler first established 
global existence and boundedness in the higher-dimensional 
parabolic-parabolic  Keller--Segel system under the condition that $\lp{p}{\uinit}$ and 
$\lp{q}{\nabla \vinit}$ are sufficiently small with some $p>\frac n2$ and $q>n$; 
Cao \cite{Xinru_higher} obtained 
global existence of bounded solutions to the parabolic-parabolic system 
under the smallness conditions for initial data in optimal spaces:  
$\lp{\frac{n}{2}}{\uinit}$ and 
$\lp{n}{\nabla \vinit}$ are small enough. 

\smallskip
As we mentioned before, 
the interaction between the parabolic-elliptic system and 
the parabolic-parabolic system 
made progress on researches of the Keller--Segel system. 
The similar things occurred in the study of the chemotaxis system with signal-dependent sensitivity which is the case that $\chi$ is a function. 
In the parabolic-elliptic system with $\chi(v)=\frac{\chi_0}{v}$ ($\chi_0>0$) 
Nagai--Senba \cite{Nagai-Senba_1998} first showed that 
if $n=2$, or $n\ge 3$ and $\chi_0<\frac{2}{n-2}$ then a radial solution is global and bounded, 
and if $n\ge 3$ and $\chi_0>\frac{2n}{n-2}$ then there exists some initial data such that 
a radial solution blows up in finite time. 
In the nonradial case Biler \cite{Biler_1999} obtained global existence of solutions to 
the parabolic-elliptic system with $\chi(v)=\frac{\chi_0}{v}$ ($\chi_0>0$) 
under the conditions that $n=2$ and $\chi_0\le 1$, or $n\ge 3$ and $\chi_0<\frac{2}{n}$.  
Thanks to these results, we can expect that conditions for global existence in 
the above system were 
determined by a dimension of a domain and  
a smallness of $\chi$ in some sense. 
Indeed, global existence and boundedness of 
solutions to the parabolic-elliptic system 
with $\chi(v)=\frac{\chi_0}{v^k}$ ($\chi_0>0$, $k\ge 1$) 
were derived 
under some smallness conditions for $\chi_0$ (\cite{Fujie-Winkler-Yokota_pe}). 
On the other hand, also in the parabolic-parabolic case, it was shown that 
some smallness condition for $\chi$ leads to global existence and boundedness; 
in the case that $\chi(v)=\frac{\chi_0}{v}$ ($\chi_0>0$) 
Winkler \cite{Winkler_2011} obtained global existence of classical solutions 
under the condition that $\chi_0<\sqrt{\frac{2}{n}}$ 
and Fujie \cite{Fujie_2015} established boundedness of these solutions;  
moreover, Lankeit \cite{Johannes_2016} improved these results 
in the 2-dimensional setting; 
in the case that $\chi(v)\le \frac{\chi_0}{(a+v)^k}$ ($\chi_0>0$, $a\ge0$, $k\ge 1$) 
some smallness condition for $\chi_0$ yields  
global existence and boundedness (\cite{Mizukami-Yokota_02}).  
In the case that $\chi$ is a more general sensitivity, 
Fujie--Senba \cite{F-S;p-e} first established 
global existence and boundedness 
in the two-dimensional parabolic-elliptic system, 
and then they also showed 
existence of radially symmetric bounded solutions to 
the parabolic-parabolic system 
in a two-dimensional ball under the condition that 
$\lambda$ is sufficiently small (\cite{F-S;p-p}). 
Recently, in the nonradial setting, 
a sufficient condition of sensitivity functions for global existence and boundedness in 
the parabolic-parabolic system 
was studied by Fujie--Senba \cite{Fujie-Senba_sensitivity}.  

\smallskip
In summary parabolic-elliptic chemotaxis systems 
often gave us some guide to 
how we could deal with parabolic-parabolic chemotaxis systems;  
however, there have not been rich results on the relation 
between the both systems. 
Namely, it still remains to analyze on the following question except some cases: 
\begin{center}
{\it Does a solution of the parabolic-parabolic system converge to\\ 
that of the parabolic-elliptic problem as $\lambda \searrow 0$?} 
\end{center}
If we can obtain some positive answer to this question, 
then we can see that solutions of both systems have some similar properties; 
thus an answer will enable us to establish 
approaches to obtain properties 
for solutions of the chemotaxis systems.  %
Here, in the case that $\Omega$ is the whole space $\mathbb{R}^n$,  
there are some positive answers to this question
in 2-dimensional case (\cite{Raczynski_2009}) and 
$n$-dimensional case (\cite{Lemarie_2013}). 
Moreover, in the case that $\Omega$ is a bounded domain and 
$\chi(v) \le \frac{\chi_0}{(a+v)^k}$ ($\chi_0>0$, $a \ge 0$, $k>1$), 
a positive answer to this question is also shown    
under the condition that $\chi_0$ is small \cite{Mizukami_fast_sensi}. 
Therefore we can expect a positive answer to this question also in the Keller--Segel system in a bounded domain $\Omega$ in some case. 
The purpose of this paper is to give some positive answer to this question. 

%
%

\smallskip
In order to attain this purpose, this paper investigates the fast signal diffusion limit, which namely is 
convergence of a solution for 
the parabolic-parabolic Keller--Segel system 
\begin{equation}\label{cp}
  \begin{cases}
    (\utau)_t = \Delta \utau 
    - \chi \nabla \cdot (\utau \nabla \vtau),
    & x\in\Omega,\ t>0, 
\\[1mm]
    \lambda (\vtau)_t = \Delta \vtau - \vtau + \utau, 
    & x\in\Omega,\ t>0, 
\\[1mm]
    \nabla \utau\cdot \nu = 
    \nabla \vtau\cdot \nu = 0, 
    & x\in\pa \Omega,\ t>0,
\\[1mm]
    \utau(x,0)=\uinit(x),\ \vtau(x,0)=\vinit(x),
    & x\in\Omega
  \end{cases}
\end{equation}
to that of the parabolic-elliptic Keller--Segel system  
\begin{equation}\label{cpp-e}
  \begin{cases}
    u_t = \Delta u 
    - \chi \nabla \cdot (u \nabla v),
    & x\in\Omega,\ t>0, 
\\[1mm]
    0 = \Delta v - v + u, 
    & x\in\Omega,\ t>0, 
\\[1mm]
    \nabla u \cdot \nu = 
    \nabla v\cdot \nu = 0, 
    & x\in\pa \Omega,\ t>0,
\\[1mm]
    u(x,0)=\uinit(x),
    & x\in\Omega 
  \end{cases}
\end{equation}
as $\lambda \searrow 0$, 
where $\Omega$ is a bounded domain in $\Rn$ ($n\ge 2$) 
with smooth boundary $\pa \Omega$ 
and $\nu$ is the outward normal vector to $\pa\Omega$; 
$\chi,\lambda > 0$ is a constant; 
the initial functions $\uinit,\vinit$ are assumed to be nonnegative functions. 
The unknown functions $\utau $ and $u$   
represent the population density of the species and 
$\vtau$ and $v$ show the concentration of the 
chemical substance 
at place $x$ and time $t$. 

%
%
%
%
Now the main results 
read as follows. 
The first theorem is concerned with 
global existence and the fast signal diffusion limit of solutions for 
the higher-dimensional Keller--Segel system under smallness conditions for 
the initial data. 
\begin{thm}\label{mainth} 
Let $\Omega$ be a bounded domain in $\Rn$ $(n\ge 3)$ with smooth boundary 
and let $\chi>0$ be a constant. 
Assume that 
$\uinit$ and $\vinit$ satisfy  
%
\begin{align}\label{ini} 
0\leq \uinit\in C(\ol{\Omega}), \quad
0\leq \vinit \in W^{1,q}(\Omega) 
\end{align} 
with some $q>n$. 
Then for all $p>\frac n2$ there exists $\ep_0=\ep_0(p,q,\chi,|\Omega|)>0$ such that, 
if $\uinit$ and $\vinit$ satisfy 
\begin{align}\label{condi;HD}
  \lp{p}{\uinit} < \ep_0 
\quad 
  \mbox{and} 
\quad
  \lp{q}{\nabla \vinit} < \ep_0, 
\end{align}
then for all $\lambda > 0$ 
the problem \eqref{cp} possesses 
a unique global bounded solution $(\utau,\vtau)$ 
which is a pair of nonnegative functions 
\[
  \utau,\vtau \in 
  C(\overline{\Omega}\times [0,\infty))
  \cap 
  C^{2,1}(\overline{\Omega}\times (0,\infty)). 
\]
Moreover, if $\uinit$ and $\vinit$ satisfy \eqref{condi;HD}, 
then there are unique functions 
\[
  u\in C(\obar\times [0,\infty))\cap 
  C^{2,1}(\obar\times (0,\infty)) 
\ \mbox{and} \ 
  v \in C^{2,0}(\obar\times (0,\infty))\cap 
  L^\infty(0,\infty;W^{1,q}(\Omega)) 
\]
such that  
the solution $(\utau,\vtau)$ of \eqref{cp} satisfies 
\begin{align*}
 &\utau \to u \quad 
 \mbox{in}\ C_{\rm loc}(\obar\times [0,\infty)),
\\
 &\vtau \to v \quad \mbox{in}\ 
 C_{\rm loc}(\obar\times (0,\infty)) \cap L^2_{\rm loc}((0,\infty);W^{1,2}(\Omega)) 
\end{align*}
as $\lambda\searrow 0$, 
and the pair of the functions $(u,v)$ solves \eqref{cpp-e} classically. 
\end{thm}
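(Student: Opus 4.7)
The plan is to establish Theorem \ref{mainth} in three stages: first, prove a uniform-in-$\lambda$ smallness/boundedness result that simultaneously yields global existence for \eqref{cp}; second, extract compactness and pass to a limit as $\lambda \searrow 0$; third, identify the limit as the unique solution of \eqref{cpp-e}. Local existence of a classical solution $(\utau,\vtau)$ on a maximal interval $[0,\tmax)$ follows from a standard fixed-point/Amann-type argument, so the main task is to push this to $\tmax=\infty$ with bounds independent of $\lambda$.

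For the uniform smallness, I would adapt the Winkler/Cao-type semigroup bootstrapping (as referenced in the introduction) to the parabolic-parabolic case, keeping track of the $\lambda$-dependence carefully. The Duhamel representation for $\vtau$ reads
\begin{equation*}
  \vtau(\cd)=e^{\frac{t}{\lambda}(\Delta-1)}\vinit+\frac1\lambda\int_0^t e^{\frac{t-s}{\lambda}(\Delta-1)}\utau(\cdot,s)\,ds,
\end{equation*}
so after the change of variable $\sigma=(t-s)/\lambda$ one obtains
\begin{equation*}
  \nabla\vtau(\cd)=e^{-t/\lambda}\nabla e^{\frac{t}{\lambda}\Delta}\vinit+\int_0^{t/\lambda}e^{-\sigma}\nabla e^{\sigma\Delta}\utau(\cdot,t-\lambda\sigma)\,d\sigma,
\end{equation*}
and the exponential factor $e^{-\sigma}$ absorbs the short-time singularity $\sigma^{-1/2-\frac{n}{2}(1/p-1/q)}$ from the Neumann heat semigroup's $L^p$-$L^q$ smoothing estimates, giving an inequality of the form $\sup_{t\in[0,T)}\lp{q}{\nabla\vtau\cd}\le C_1\lp{q}{\nabla\vinit}+C_2\sup_{t\in[0,T)}\lp{p}{\utau\cd}$ with constants $C_1,C_2$ independent of $\lambda$ and $T$. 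A parallel Duhamel argument applied to $\utau$, combined with the estimate $\|\nabla\cdot(\utau\nabla\vtau)\|\le\lp{p}{\utau}\lp{q}{\nabla\vtau}\cdot(\mathrm{const})$, closes a coupled inequality of the form $M_p(T)\le C_3\lp{p}{\uinit}+C_4M_p(T)N_q(T)$ and $N_q(T)\le C_1\lp{q}{\nabla\vinit}+C_2M_p(T)$, where $M_p,N_q$ denote the suprema of $\lp{p}{\utau\cd}$ and $\lp{q}{\nabla\vtau\cd}$ on $[0,T)$. Choosing $\ep_0$ small enough so that the contraction closes gives $\lambda$-independent smallness, from which extrapolation to $L^\infty$-bounds for $\utau$ and $\vtau$ (and hence $\tmax=\infty$) is classical.

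With $\utau,\vtau$ uniformly bounded in $L^\infty(\Omega\times(0,\infty))$ independently of $\lambda$, parabolic regularity applied to the $u$-equation yields a uniform interior and up-to-the-boundary Hölder estimate for $\utau$, so Arzelà--Ascoli extracts a subsequence $\utaunj\to u$ in $C_{\rm loc}(\obar\times[0,\infty))$. For $\vtau$, uniform boundedness in $L^\infty(0,\infty;W^{1,q}(\Omega))$ and standard elliptic estimates applied to $-\Delta\vtau+\vtau=\utau-\lambda(\vtau)_t$ combined with an energy estimate obtained by testing the $v$-equation by $\vtau$ (yielding $\|\nabla\vtau\|_{L^2}^2$ control in terms of data and $\lambda\|(\vtau)_t\|$ terms that vanish after integration in time) give strong compactness of $\vtaunj$ in $L^2_{\rm loc}((0,\infty);W^{1,2}(\Omega))$. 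The multiplier $\lambda$ in front of $(\vtau)_t$ ensures that, after testing against any $\varphi\in C_c^\infty(\Omega\times(0,\infty))$, the term $\lambda\int\int(\vtau)_t\varphi=-\lambda\int\int\vtau\varphi_t$ vanishes as $\lambda\searrow 0$ by the uniform $L^\infty$-bound on $\vtau$, so the limit $v$ satisfies $0=\Delta v-v+u$ weakly, hence classically by bootstrapping.

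The main obstacle will be precisely the uniform-in-$\lambda$ control just described: the singular factor $1/\lambda$ in the Duhamel formula for $\vtau$ must be handled so that smoothing estimates for $e^{\frac{t}{\lambda}(\Delta-1)}$ retain constants independent of $\lambda$, and the subsequent gradient estimate in $L^q$ with $q>n$ must not pick up any loss as $\lambda\searrow 0$; the change of variable above is the decisive ingredient. Once the limit $(u,v)$ is obtained for a subsequence, the uniqueness of classical solutions to \eqref{cpp-e} under the smallness condition (which follows from a Gronwall-type argument on differences of two solutions, exploiting $\chi$ small or $u$ small) promotes subsequential convergence to full convergence as $\lambda\searrow0$, completing the proof.
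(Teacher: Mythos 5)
Your overall architecture (uniform-in-$\lambda$ a priori bounds via the Neumann heat semigroup, then Arzel\`a--Ascoli compactness, then uniqueness of the limit problem to upgrade subsequential to full convergence) coincides with the paper's. The genuine gap is in the step you yourself flag as decisive: the claim that
\[
\sup_{t<T}\lp{q}{\nabla\vtau\cd}\le C_1\lp{q}{\nabla\vinit}+C_2\sup_{t<T}\lp{p}{\utau\cd}
\]
follows because ``the exponential factor $e^{-\sigma}$ absorbs the short-time singularity $\sigma^{-1/2-\frac n2(1/p-1/q)}$.'' An exponential weight does not make a singularity at $\sigma=0$ integrable: $\int_0^1\sigma^{-\beta}e^{-\sigma}\,d\sigma=\infty$ whenever $\beta\ge1$, and under the hypotheses $p>\frac n2$, $q>n$ one can perfectly well have $\frac12+\frac n2(\frac1p-\frac1q)\ge1$ (e.g.\ $n=3$, $p=1.51$, $q=30$). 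So the coupled inequality you use to close the contraction is false as stated, and the scheme $M_p\le C_3\ep+C_4M_pN_q$, $N_q\le C_1\ep+C_2M_p$ does not get off the ground for general admissible $(p,q)$. The repair is to measure $\nabla\vtau$ in $L^{q_0}$ for an intermediate exponent $q_0\in\bigl(n,\min\{q,\frac{np}{(n-p)_+}\}\bigr)$ (a nonempty interval precisely because $p>\frac n2$ and $q>n$), for which $\frac12+\frac n2(\frac1p-\frac1{q_0})<1$ makes the $v$-step integrable while $\frac12+\frac n{2q_0}<1$ keeps the $u$-step integrable; this exponent bookkeeping is exactly the content of the paper's Lemma \ref{lem;constant}.

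Beyond that repair, your route genuinely differs from the paper's in how the a priori estimate is closed: you run a direct two-quantity contraction on $\sup_t\lp{p}{\utau\cd}$ and $\sup_t\lp{q_0}{\nabla\vtau\cd}$, whereas the paper (Lemma \ref{lem;Lpesti;HD}) instead controls the deviation $\lp{\theta}{\utau\cd-e^{t\Delta}\uinit}$ for a carefully chosen $\theta>p$, interpolates $\lp{q_0\mu/(q_0-\mu)}{\utau}$ between the conserved $L^1$ norm and $L^\theta$ to gain an extra factor $\ep^a$, and only then runs the continuity argument; your variant, once the exponents are fixed, would be somewhat more elementary. Two smaller points: uniqueness for \eqref{cpp-e} does not require smallness of $\chi$ or of $u$ (boundedness of the limit suffices for the Gronwall argument, cf.\ Lemma \ref{uniqueness}); and the uniform-in-$\lambda$ H\"older compactness of $\vtau$ in $C_{\rm loc}(\obar\times(0,\infty))$, needed for the stated mode of convergence as the $v$-equation degenerates from parabolic to elliptic, is not standard parabolic theory and is delegated by the paper to the arguments of \cite{Wang-Winkler-Xiang}, whereas your sketch passes over it.
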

%
%

As an application of this result, 
we can establish a new result which provides global existence and boundedness 
in the higher-dimensional parabolic-elliptic Keller--Segel system \eqref{cpp-e} 
under some smallness condition for initial data $\uinit$. 

\begin{corollary}\label{maincoro1}
Let $\Omega$ be a bounded domain in $\Rn$ $(n\ge 3)$ with smooth boundary 
and let $\chi>0$ be a constant. 
Then for all $p>\frac{n}{2}$ there exists $\ep_1=\ep(p,\chi,|\Omega|) >0$ such that, 
if $\uinit\in C(\ol{\Omega})$ satisfies that 
\begin{align*}
  \lp{p}{\uinit} < \ep_1 
\end{align*}
holds, %
then the problem \eqref{cpp-e} possesses a 
unique global bounded classical solution.   
\end{corollary}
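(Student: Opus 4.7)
The strategy is to reduce Corollary~\ref{maincoro1} to Theorem~\ref{mainth} by constructing an auxiliary second initial datum $\vinit$ from $\uinit$. Given $\uinit \in C(\overline{\Omega})$ with $\lp{p}{\uinit}$ small, I would define $\vinit$ as the unique nonnegative solution of the Neumann problem $-\Delta \vinit + \vinit = \uinit$ in $\Omega$ (nonnegativity via the maximum principle). Standard $L^p$-elliptic regularity then provides $\|\vinit\|_{W^{2,p}(\Omega)} \le C_1 \lp{p}{\uinit}$, and since $p > \tfrac n2$, the Sobolev embedding $W^{2,p}(\Omega) \hookrightarrow W^{1,q}(\Omega)$ holds for some $q > n$ (concretely, $q = np/(n-p)$ when $\tfrac n2 < p < n$, any finite $q$ when $p = n$, and $q$ arbitrary when $p > n$), yielding
\[
  \lp{q}{\nabla \vinit} \le C_2 \lp{p}{\uinit}.
\]
Hence setting $\varepsilon_1 := \min\{\varepsilon_0, \varepsilon_0/C_2\}$ makes both smallness conditions in \eqref{condi;HD} hold for the pair $(\uinit,\vinit)$, so Theorem~\ref{mainth} is applicable.

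Invoking Theorem~\ref{mainth} then produces the family $(\utau,\vtau)_{\lambda>0}$ together with the limit $(u,v)$, where $(u,v)$ solves \eqref{cpp-e} classically and $u(\cdot,0) = \uinit$. For the global boundedness of $u$, I would exploit the uniform-in-$\lambda$ bound $\sup_{\lambda>0}\|\utau\|_{L^\infty(\Omega\times(0,\infty))}<\infty$ that should emerge from the proof of Theorem~\ref{mainth}; the locally uniform convergence $\utau \to u$ transfers this bound to $u$. Boundedness of $v$ is then automatic from $-\Delta v + v = u \in L^\infty(\Omega\times(0,\infty))$ combined with elliptic regularity. Uniqueness in the class of bounded classical solutions follows by a standard energy estimate: for two such solutions one tests $u_1 - u_2$ against itself, uses the elliptic identity $-\Delta(v_1 - v_2) + (v_1 - v_2) = u_1 - u_2$ together with the available $L^\infty$ bounds on $u_i$ and $\nabla v_i$, and concludes by Gronwall.

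The principal obstacle is that Theorem~\ref{mainth}, as stated, asserts boundedness of $(\utau,\vtau)$ only for each fixed $\lambda$, without explicitly articulating uniformity in $\lambda$. The corollary rests on extracting such a uniform bound from the proof of the theorem—a point which I expect to appear naturally inside the estimates used there (for instance, in any bootstrapped $L^\infty$-bound obtained from the $L^p$-smallness hypothesis, provided all constants can be tracked to be $\lambda$-independent). Once this uniformity is confirmed, the remainder of the argument is routine.
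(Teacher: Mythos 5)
Your proposal is correct and follows essentially the same route as the paper: reduce to Theorem \ref{mainth} by supplying an auxiliary $\vinit$ with $\lp{q}{\nabla \vinit}$ small, the only differences being that the paper simply takes \emph{any} such $\vinit$ (a constant would do, so your elliptic construction of $\vinit$ from $\uinit$ is unnecessary) and sets $\ep_1 := \sup_{q>n}\ep_0(p,q,\chi,|\Omega|)$ rather than fixing one $q$. The uniform-in-$\lambda$ $L^\infty$ bound that you flag as the ``principal obstacle'' is indeed available in the paper: it is precisely the content of Lemma \ref{lem;boundedness;HD}, and Remark \ref{remark1} records that it passes to the limit $(u,v)$ exactly as you describe.
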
 

\begin{remark}
In these results we assume the smallness conditions for  
$\lp{p}{\uinit}$ and $\lp{q}{\nabla \vinit}$ with some $p> \frac n2$ and $q>n$, 
instead of $p=\frac{n}{2}$ and $q=n$ 
which are the conditions assumed in \cite{Xinru_higher}; 
we could not attain fast signal diffusion limit under the smallness 
conditions in optimal spaces. 
\end{remark}

In the 2-dimensional setting, 
it is known that global existence and boundedness in \eqref{cp} hold 
under the condition that 
$\lp{1}{\uinit}\le \frac{4\pi}{\chi}$ (\cite{Nagai-Senba-Yoshida}). 
Thanks to this previous work, we attain the fast signal diffusion limit in 
the 2-dimensional Keller--Segel system under the smallness conditions for 
the initial data in the optimal space. 

\begin{thm}\label{mainth2}
Let $\Omega$ be a bounded domain in $\mathbb{R}^2$ with smooth boundary 
and let $\chi>0$ be a constant. 
Assume that 
$\uinit \in C(\overline{\Omega})$ satisfies  
$
\lp{1}{\uinit}<\frac{4\pi}{\chi}. 
$ 
Then there exist unique functions 
\[
  u\in C(\obar\times [0,\infty))\cap 
  C^{2,1}(\obar\times (0,\infty)) 
\ \mbox{and} \ 
  v \in C^{2,0}(\obar\times (0,\infty))\cap 
  L^\infty(0,\infty ;W^{1,q}(\Omega)) 
\]
such that for all $\vinit\in W^{1,q}(\Omega)$ $(q>2)$ 
the global bounded classical solution $(\utau,\vtau)$ of \eqref{cp} satisfies 
\begin{align*}
 &\utau \to u \quad 
 \mbox{in}\ C_{\rm loc}(\obar\times [0,\infty)),
\\
 &\vtau \to v \quad \mbox{in}\ 
 C_{\rm loc}(\obar\times (0,\infty)) \cap L^2_{\rm loc}((0,\infty);W^{1,2}(\Omega)) 
\end{align*}
as $\lambda\searrow 0$, 
and the pair of the functions $(u,v)$ solves \eqref{cpp-e} classically. 
\end{thm}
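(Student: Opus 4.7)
The plan is to follow the same overall pattern as the proof of Theorem~\ref{mainth}, namely to derive $\lambda$-independent a priori estimates on $(\utau,\vtau)$ and then pass to the limit by compactness, but with the higher-dimensional $L^p$ bootstrap replaced by two-dimensional tools, chiefly the entropy/Lyapunov functional and the Moser--Trudinger inequality. The existence of a global bounded classical solution of \eqref{cp} for each fixed $\lambda>0$ under the hypothesis $\lp{1}{\uinit}<\frac{4\pi}{\chi}$ is already furnished by Nagai--Senba--Yoshida \cite{Nagai-Senba-Yoshida}, so the real task is to verify that the bounds can be taken independent of $\lambda\in(0,\lambdaz)$ for some $\lambdaz>0$; once this is granted, the limit procedure proceeds almost verbatim as in the higher-dimensional case.

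First I would exploit the classical Lyapunov functional
\[
\mathcal{F}(\utau,\vtau)(t)=\into \utau\ln \utau-\chi\into \utau \vtau
+\frac{\chi}{2}\into(|\nabla \vtau|^2+\vtau^2),
\]
which along trajectories of \eqref{cp} satisfies
\[
\frac{d}{dt}\mathcal{F}(\utau,\vtau)
=-\into \utau\bigl|\nabla(\ln \utau-\chi \vtau)\bigr|^2
-\chi\lambda\into(\vtau)_t^2\le 0.
\]
Combining the $L^1$ conservation $\lp{1}{\utau\cd}=\lp{1}{\uinit}$ with the sharp Moser--Trudinger inequality on $\Omega$, the strict smallness $\chi\lp{1}{\uinit}<4\pi$ yields uniform-in-$(\lambda,t)$ bounds on $\into \utau\ln \utau$ and on $\lp{2}{\nabla \vtau}$, as well as on $\lambda\int_0^T\into(\vtau)_t^2$ for every $T>0$. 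Feeding these into variation-of-constants estimates for the $\vtau$-equation produces a uniform $L^q(\Omega)$ bound for $\nabla \vtau$ with some $q>2$; a standard Moser-type iteration, exactly as in the fixed-$\lambda$ theory, then upgrades this to $\lambda$-independent $L^\infty$ estimates on both components, and parabolic Schauder theory provides uniform local H\"older regularity for $(\utau,\vtau)$ away from $t=0$, together with uniform continuity of $\utau$ up to $t=0$.

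Given these uniform estimates, Arzel\`a--Ascoli extracts a sequence $\lambdanj\searrow 0$ along which $\utaunj\to u$ in $C_{\rm loc}(\obar\times[0,\infty))$ and $\vtaunj\to v$ in $C_{\rm loc}(\obar\times(0,\infty))$, while the $L^\infty(0,\infty;W^{1,q}(\Omega))$ bound on $\vtau$ combined with interpolation delivers the claimed $L^2_{\rm loc}((0,\infty);W^{1,2}(\Omega))$ convergence. The residual term $\lambda(\vtau)_t$ tends to zero strongly in $L^2_{\rm loc}((0,\infty);L^2(\Omega))$ because the dissipation estimate gives $\int_0^T\into|\lambda(\vtau)_t|^2=\lambda\cdot\lambda\int_0^T\into(\vtau)_t^2\le\lambda C\to 0$. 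Passing to the limit in the weak formulation of \eqref{cp} therefore identifies $(u,v)$ as a distributional solution of \eqref{cpp-e}, which elliptic/parabolic regularity upgrades to a classical one; uniqueness of classical solutions of \eqref{cpp-e} in this regularity class then rules out subsequence dependence and yields convergence of the entire family as $\lambda\searrow 0$.

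The main obstacle will be isolating the $\lambda$-independent part of the two-dimensional bootstrap: several intermediate estimates in \cite{Nagai-Senba-Yoshida} carry constants that, if naively traced, blow up as $\lambda\searrow 0$, and keeping them uniform in $\lambda$ requires systematically exploiting the Lyapunov dissipation together with the sharp threshold $4\pi/\chi$. A secondary technical point is that the entropy bound alone controls $\utau$ only in $L\log L$, which in two dimensions is just on the borderline for the Moser--Trudinger inequality; care will be needed in the initial step of the bootstrap to convert this borderline information into a genuine $L^p$ bound with some $p>1$ without losing uniformity in $\lambda$.
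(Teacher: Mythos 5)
Your overall strategy is the one the paper actually follows: the uniform-in-$\lambda$ bounds come from the Lyapunov functional together with the Trudinger--Moser inequality under the threshold $\chi\lp{1}{\uinit}<4\pi$ (this is Lemma \ref{lem;Lpesti;2D}, adapting \cite{Nagai-Senba-Yoshida}), the upgrade to $L^\infty$ is done by the semigroup/Moser machinery of Lemma \ref{lem;estimateforufromp}, and the limit $\lambda\searrow 0$ is taken by compactness plus uniqueness of the limit (Lemmas \ref{lem;boundedinholder}--\ref{lem;convergence;final}), with the dissipation bound on $\lambda\int_0^t\lp{2}{(\vtau)_t}^2$ killing the residual term exactly as you say. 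However, the middle link of your bootstrap, as written, fails: from the entropy you only control $\utau$ in $L\log L$, and in two dimensions the variation-of-constants formula for $\vtau$ cannot produce a uniform $W^{1,q}$ bound with $q>2$ from $L^1$-type (or $L\log L$-type) control of $\utau$ alone, since the relevant semigroup kernel $t^{-\frac12-\frac n2(\frac1p-\frac1q)}$ is not integrable near $t=0$ when $p=1$ and $q>2$. You flag this as a ``secondary technical point'' at the end, but it is in fact the central estimate of the two-dimensional argument, and your proposal does not supply it.

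The paper closes this gap by first proving a uniform $L^2$ bound on $\utau$: testing the first equation of \eqref{cp} with $\utau$ gives
\begin{align*}
\frac12\frac{d}{dt}\into \utau^2=-\into|\nabla\utau|^2-\frac{\chi\lambda}{2}\into\utau^2(\vtau)_t+\frac{\chi}{2}\into\utau^3-\frac{\chi}{2}\into\utau^2\vtau,
\end{align*}
and the two dangerous terms are handled separately. The cubic term is absorbed using the refined Gagliardo--Nirenberg inequality of \cite[Lemma 3.5]{Nagai-Senba-Yoshida}, whose coefficient in front of $\lp{2}{\nabla\utau}^2$ is proportional to $\lp{1}{\utau\log\utau}$ and hence can be made small precisely because of the entropy bound; this is how the borderline $L\log L$ information is converted into a genuine $L^2$ bound. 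The term $-\frac{\chi\lambda}{2}\into\utau^2(\vtau)_t$, which has no analogue in the fixed-$\lambda$ parabolic-elliptic theory, is estimated by $\ep\lp{2}{\nabla\utau}^2+(C\lambda^2\lp{2}{(\vtau)_t}^2+\frac14)\lp{2}{\utau}^2$, and the resulting time-dependent coefficient is kept under control uniformly for $\lambda\in(0,\lambda_0)$ only because the Lyapunov identity gives $\lambda\int_0^t\lp{2}{(\vtau)_t(\cdot,s)}^2\,ds\le\frac1\chi(|\wlam(0)|+C_1)$, so that the exponent $\phi(t)=\frac12 t-\frac{C\lambda^2}{2}\int_0^t\lp{2}{(\vtau)_t}^2$ in the Gronwall argument differs from $\frac12 t$ by a bounded amount. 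If you intend your proposal to be complete, you must carry out this $L^2$ step (or an equivalent one) explicitly; without it the passage from the entropy bound to any $L^p$ bound with $p>1$, and hence the whole $L^\infty$ bootstrap, is unsupported. Note also that the resulting constants depend on $\lambda_0$, so the boundedness is uniform only over $\lambda\in(0,\lambda_0)$ for each fixed $\lambda_0>0$, which suffices for the limit $\lambda\searrow0$.
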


This result tells us 
a new method to obtain global existence and boundedness in 
the 2-dimensional parabolic-elliptic Keller--Segel system \eqref{cpp-e}. 

\begin{corollary}\label{maincoro2} 
Let $\Omega$ be a bounded domain in $\mathbb{R}^2$ with smooth boundary 
and let $\chi>0$ be a constant. 
If $\uinit\in C(\ol{\Omega})$ satisfies 
$
  \lp{p}{\uinit} < \frac{4\pi}{\chi},  
$ 
then the problem \eqref{cpp-e} possesses a 
unique global bounded classical solution.   
\end{corollary}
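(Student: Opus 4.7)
The plan is to derive this corollary directly from Theorem~\ref{mainth2} by supplementing the given $\uinit$ with an arbitrary auxiliary second initial datum. Given $\uinit\in C(\obar)$ satisfying $\lp{1}{\uinit}<\frac{4\pi}{\chi}$, I would pick any $\vinit\in W^{1,q}(\Omega)$ with $q>2$ (for concreteness $\vinit\equiv 0$ works), and invoke Theorem~\ref{mainth2}. This immediately produces a pair of functions $(u,v)$, with the regularity stated in that theorem, which classically solves \eqref{cpp-e}; the uniqueness clause of Theorem~\ref{mainth2} already identifies $(u,v)$ independently of the particular choice of $\vinit$.

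The next step is to verify boundedness, which is the additional claim of the corollary beyond mere existence. In the proof of Theorem~\ref{mainth2} the $C_{\rm loc}$-convergence $\utau\to u$ must rest on a $\lambda$-independent $L^\infty$-estimate on $\utau$ obtained by revisiting the Nagai--Senba--Yoshida type arguments uniformly in $\lambda$; passing this bound to the limit yields $u\in L^\infty(\Omega\times(0,\infty))$. For the boundedness of $v$, one applies standard $L^p$ elliptic regularity to the second equation $-\Delta v+v=u$ with homogeneous Neumann boundary condition, possibly followed by a Sobolev embedding, so as to convert the uniform $L^\infty$-bound on $u\cd$ into a uniform $L^\infty$-bound on $v\cd$.

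Uniqueness of bounded classical solutions to \eqref{cpp-e} (in the stronger sense of uniqueness among all bounded classical solutions, rather than only the uniqueness of the limit pair) is established by a standard contraction/Gronwall argument. Taking two such solutions $(u_i,v_i)$ and setting $w=u_1-u_2$, $z=v_1-v_2$, I would test the equation for $w$ against $w$, use the elliptic estimate $\|z\cd\|_{W^{2,p}(\Omega)}\le C\|w\cd\|_{L^p(\Omega)}$ arising from the second equation, together with the a priori $L^\infty$-bounds on $u_i$ and $\nabla v_i$ secured in the previous step, and thereby derive a differential inequality of the form $\frac{d}{dt}\into w^2\le C\into w^2$, which forces $w\equiv 0$ and hence $z\equiv 0$.

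The principal obstacle is extracting the $\lambda$-independent $L^\infty$-bound on $\utau$ which underpins both the local-uniform convergence and the boundedness of $u$; this is not a direct consequence of the Nagai--Senba--Yoshida theorem as originally stated, since that result controls each fixed $\lambda$ separately and the bound could in principle deteriorate as $\lambda\searrow 0$. However, such a uniform estimate must be a by-product of the analysis carried out in the proof of Theorem~\ref{mainth2}, and once it is in hand the remaining steps reduce to routine elliptic regularity and Gronwall arguments.
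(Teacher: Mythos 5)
Your proposal is correct and follows essentially the same route as the paper: the paper's proof of Corollary~\ref{maincoro2} is a one-line appeal to Theorem~\ref{mainth2}, whose content is exactly your chain of steps --- existence and boundedness of the limit solution come from Lemma~\ref{lem;boundedinholder} together with Remark~\ref{remark1}, and uniqueness from the Gronwall-type argument of Lemma~\ref{uniqueness}. The uniform-in-$\lambda$ $L^\infty$-bound that you flag as the ``principal obstacle'' is precisely what Lemmas~\ref{lem;Lpesti;2D} and~\ref{lem;boundedness;2D} establish, so no gap remains.
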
 

In the proof of these main results difficulties are caused by the facts that 
$\vtau$ satisfies a parabolic equation and 
$v$ satisfies an elliptic equation.  
Thus we cannot use methods only for parabolic equations and 
only for elliptic equations when we would like to obtain some 
error estimate for solutions of \eqref{cp} and those of \eqref{cpp-e}, 
and it seems to be difficult to combine these methods. 
Therefore we rely on a compactness method to obtain 
convergence of a solution $(\utau,\vtau)$ as $\lambda\searrow 0$, 
which is the same strategy as that of the proof of \cite[Theorem 1.3]{Mizukami_fast_sensi}. 
In order to use a compactness method 
some estimate for the solution uniformly in time and $\lambda$ is required. 
In the chemotaxis system with signal dependent sensitivity 
the boundedness of $\into \utau^p\cd \exp\{-r \int_0^{\vtau \cd} \chi(s)\,ds\}$ 
with some $r>0$ 
and the fact $\int_0^\infty \chi(s)\, ds < \infty$ lead to the desired estimate 
(\cite{Mizukami_fast_sensi}). 
Nevertheless, in the Keller--Segel setting, it is difficult to obtain 
the boundedness of $\int_0^{\vtau\cd} \chi \,ds = \chi \vtau \cd$. 
Thus we should give the other method to obtain the desired estimate in the Keller--Segel setting.  
However, 
in the higher-dimensional case, 
a construction of some estimate for the solution 
uniformly in time and $\lambda$ is a 	challenging problem: 
Indeed, in the previous works \cite{Xinru_higher,win_aggregationvs} the following inequality 
was 
obtained: 
\[
 \lp{\infty}{\utau \cd -e^{t\Delta} \uinit} \le C(1 + t^{-\alpha}) e^{-\beta t} 
 \quad \mbox{for all} \ t>0
\]
with some $C,\alpha,\beta>0$, which could not lead to 
the uniform-in-time estimate for the solution. 
This is one of the reason why we could not attain fast signal diffusion limit 
under the smallness conditions in optimal spaces in the higher-dimensional setting. 
To establish the $L^\infty$-estimate for $\utau$ uniformly in time and $\lambda$ 
we modified the method 
in \cite{win_aggregationvs}. Let $\ep>0$ be a constant fixed later and put 
\[
 \tlam := \sup \left\{\hat{T}>0\, {\big |}\, \lp{\theta}{\utau\cd - e^{t\Delta}\uinit }< \ep \quad  
 \mbox{for all} \ t\in (0,\hat{T}) \ \mbox{and all} \ \lambda > 0 \right\} \le \infty  
\] 
with some $\theta > \frac n2$, 
which is different from a setting in \cite{win_aggregationvs}.  
Then, under the conditions that $\lp{p}{\uinit} \le \ep$ and $\lp{q}{\nabla \vinit} \le \ep$, 
we can see that 
\[
  \lp{\theta }{\utau\cd - e^{t\Delta}\uinit} \le C(\ep) \ep 
\quad 
  \mbox{for all} \ t \in (0,\tlam) \ \mbox{and all} \ \lambda>0,  
\] 
where $C(\ep)>0$ is a constant such that $C(\ep) \searrow 0$ 
as $\ep \searrow 0$. 
Thus by choosing $\ep >0$ satisfying $C(\ep) < 1$, 
we can obtain the $L^{\theta}$-estimate for $\utau$ 
uniformly in time and $\lambda$, 
which with the standard $L^p$-$L^q$ estimate for 
the Neumann heat semigroup on bounded domains 
implies the desired estimate for $\utau$. 
This strategy enables us to pass to the fast signal diffusion limit; 
however, it 
also lets us assume that $\lp{p}{\uinit}$ and $\lp{q}{\nabla \vinit}$ are small with some 
$p>\frac n2$ and $q>n$ in Theorem \ref{mainth}. 
On the other hand, in the 2-dimensional setting, 
by using a combination of an argument in the proof of 
\cite[Theorem 1.1]{Nagai-Senba-Yoshida} and a compactness method 
we can show fast signal diffusion limit under the smallness conditions for 
the initial data in optimal spaces. 

%
%
This paper is organized as follows. 
In Section 2 we collect basic facts 
which will be used later. 
In Section 3  
we prove global existence and 
uniform-in-$\lambda$ boundedness in \eqref{cp}; 
we divide the section into Sections 3.1 and 3.2 according to the higher-dimensional setting 
and the $2$-dimensional setting, respectively.  
Section 4 is devoted to the proofs of the main results 
according to arguments in \cite{Wang-Winkler-Xiang}; 
we show convergence of the solution $(\utau,\vtau)$ for \eqref{cp} 
as $\lambda\searrow 0$ 
by using the uniform-in-$\lambda$ estimate 
established in Section 3. 

%
%

\section{Preliminaries}

In this section we collect results which will be used later. 
We first recall the well-known result concerned with local existence of solutions to \eqref{cp} 
(see e.g., \cite[Lemma 3.1]{B-B-T-W}). 

%
%
%
%
\begin{lem}\label{lem;localexistence}
Let $\Omega$ be a bounded domain in $\Rn$ $(n\ge 2)$ 
with smooth boundary, 
and let $\chi>0$ be a constant. 
Then for all $\lambda>0$ and any $\uinit,\vinit$ satisfying \eqref{ini} 
there exists $\tmax\in (0,\infty]$ such that 
the problem \eqref{cp} possesses a unique solution 
$(\utau,\vtau)$ fulfilling 
\begin{align*}
  &\utau\in C(\overline{\Omega}\times [0,\tmax))
  \cap C^{2,1}(\overline{\Omega}\times (0,\tmax)), 
\\
  &\vtau\in C(\overline{\Omega}\times [0,\tmax))
  \cap C^{2,1}(\overline{\Omega}\times (0,\tmax)) 
  \cap L^\infty_{\rm loc}([0,\tmax);W^{1,q}(\Omega)),
\\
  &\utau (x,t)\ge 0\quad \mbox{and} \quad \vtau(x,t) \ge 0 \quad 
  \mbox{for all}\ x\in \Omega, \ t>0 \ \mbox{and all} \ \lambda>0,
\\
  &\into \utau\cd = \into \uinit
  \quad \mbox{for all}\ t\in (0,\tmax) \ \mbox{and all}\ \lambda>0.
\end{align*}
Moreover, either $\tmax=\infty$ or 
\begin{align*}
  \limsup_{t\to\tmax}(\lp{\infty}{\utau\cd}
  + \|\vtau\cd\|_{W^{1,q}(\Omega)})=\infty. 
\end{align*}
\end{lem}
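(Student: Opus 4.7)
The plan is a standard Banach fixed point argument on a short time interval, in the spirit of \cite{B-B-T-W}. Fix $\lambda>0$ and let $T\in(0,1)$ be chosen later; set $X_T:=C([0,T];C(\obar))$ equipped with the supremum norm. Given a nonnegative $\bar u\in X_T$, first solve the linear parabolic problem
\begin{equation*}
\lambda v_t=\Delta v-v+\bar u,\qquad \nabla v\cdot\nu=0 \ \mbox{on}\ \pa\Omega,\qquad v(\cdot,0)=\vinit
\end{equation*}
via the Neumann heat semigroup to obtain $v\in C([0,T];W^{1,q}(\Omega))$, and then solve
\begin{equation*}
u_t=\Delta u-\chi\nabla\cdot(u\nabla v),\qquad \nabla u\cdot\nu=0 \ \mbox{on}\ \pa\Omega,\qquad u(\cdot,0)=\uinit
\end{equation*}
by the variation-of-constants formula. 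Denote the resulting map by $\Phi:\bar u\mapsto u$; any fixed point of $\Phi$ yields a mild solution of \eqref{cp}.

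Contraction is verified by the classical $L^p$-$L^q$ estimates for the Neumann heat semigroup. Since $q>n$, Morrey's embedding $W^{1,q}(\Omega)\hookrightarrow L^\infty(\Omega)$ controls $\nabla v$ in $L^q$ with a norm depending continuously on $\|\bar u\|_{X_T}$ and $\|\vinit\|_{W^{1,q}(\Omega)}$, and in the Duhamel representation for $u$ the kernel $(t-s)^{-\frac12-\frac{n}{2q}}$ arising from $\|e^{(t-s)\Delta}\nabla\cdot(\cdot)\|_{L^q\to L^\infty}$ is integrable in $s$ up to $t$ precisely because $q>n$. Choosing $T>0$ small enough in terms of $\lambda$, $\chi$, $\lp{\infty}{\uinit}$ and $\|\vinit\|_{W^{1,q}(\Omega)}$, one checks that $\Phi$ preserves a suitable closed ball of nonnegative functions in $X_T$ and is a strict contraction. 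Banach's theorem supplies the unique fixed point, and parabolic Schauder estimates bootstrap this mild solution to a classical solution with the regularity asserted, with $\vtau\in L^\infty_{\rm loc}([0,\tmax);W^{1,q}(\Omega))$.

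Nonnegativity of $\vtau$ follows from the parabolic maximum principle applied to the second equation once $\utau\geq 0$ has been inserted; conversely, once $\vtau$ is available, the first equation is a linear parabolic equation for $\utau$ with smooth coefficients, and the maximum principle yields $\utau\geq 0$. Running the fixed-point iteration within the nonnegative cone (which is closed and $\Phi$-invariant) makes this compatible with the contraction step. Mass conservation $\into\utau\cd=\into\uinit$ is obtained by integrating the first equation over $\Omega$ and invoking the divergence theorem together with the Neumann boundary condition. Finally, the extensibility alternative is the standard continuation argument: if $\tmax<\infty$ and $\lp{\infty}{\utau\cd}+\|\vtau\cd\|_{W^{1,q}(\Omega)}$ remained bounded on $[0,\tmax)$, then the local existence time furnished by the fixed-point theorem started from any $t_0\nearrow\tmax$ could be bounded below uniformly, allowing continuation past $\tmax$ and contradicting its maximality.

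The only genuinely delicate point — routine but characteristic of the Keller--Segel framework — is synchronising the two regularity scales in the contraction step: $\utau$ is estimated in $L^\infty$ whereas $\nabla\vtau$ is controlled only in $L^q$, and the cross-term $\chi\nabla\cdot(\utau\nabla\vtau)$ must be handled compatibly with both. The hypothesis $q>n$ is exactly what renders the relevant convolution kernel integrable near $s=t$, which is what makes the contraction estimate quantitative and the whole scheme self-consistent.
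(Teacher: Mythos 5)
Your proposal is correct and coincides with the standard argument: the paper itself gives no proof of this lemma, merely citing \cite[Lemma 3.1]{B-B-T-W}, and the Banach fixed-point scheme you describe (solve for $v$ by the semigroup, insert into the Duhamel formula for $u$, contract in $C([0,T];C(\obar))$ using the $L^q$--$L^\infty$ smoothing of $e^{t\Delta}\nabla\cdot$ with $q>n$, then bootstrap, apply the maximum principle, and integrate for mass conservation) is exactly the proof in that reference. No discrepancy to report.
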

%

%
%
%
%
We next introduced the $L^p$-$L^q$ estimates for 
the Neumann heat semigroup on bounded domains 
which are often utilized to estimate terms coming from 
the variation-of-constants representation for the solutions. 
The following lemma and its proof can be found in 
\cite[Lemma 2.1]{Xinru_higher} 
(or see \cite[Lemma 1.3]{win_aggregationvs}). 

\begin{lem}\label{lem;estif;semi}
Let $(e^{t\Delta})_{t\ge 0}$ be the Neumann heat semigroup in $\Omega$, 
and let $\alpha>0$ denote the first nonzero eigenvalue 
of $-\Delta$ in $\Omega$ under Neumann boundary conditions. 
There are $k_1,k_2,k_3,k_4>0$ only depending on $|\Omega|$ satisfying the following properties. 
 \begin{itemize}
  \item[{{\rm (i)}}] 
    If $1\le q \le p \le \infty$, then 
    \[
      \lp{p}{e^{t\Delta} \varphi} \le 
        k_1 (1+ t^{\frac{n}{2}(\frac{1}{q}-\frac{1}{p})})e^{-\alpha t} \lp{q}{\varphi} 
    \quad \mbox{for all} \ t>0
    \] 
    holds for all $\varphi \in L^q(\Omega)$ satisfying $\into \varphi = 0$. 
  \item[{{\rm (ii)}}]
    If $1\le q\le p \le \infty$, then 
    \[
     \lp{p}{\nabla e^{t\Delta}\varphi}\le 
     k_2 (1 + t^{-\frac 12-\frac n2(\frac 1q -\frac 1p)})
     e^{-\alpha t}\lp{q}{\varphi} 
         \quad \mbox{for all} \ t>0
    \]
    is true for all $\varphi \in L^q(\Omega)$. 
  \item[{{\rm (iii)}}]
    If $2\le q\le p \le \infty$, then 
    \[
      \lp{p}{\nabla e^{t\Delta} \varphi} \le 
      k_3 (1+t^{-\frac{n}{2}(\frac 1q -\frac 1p)})e^{-\alpha t}\lp{q}{\nabla \varphi}
    \quad \mbox{for all} \ t>0
    \] 
    is valid for all $\varphi \in W^{1,p}(\Omega)$. 
  \item[{{\rm (iv)}}]
    If $1<q\le p \le \infty$, then 
    \[
      \lp{p}{e^{t\Delta }\nabla \cdot \varphi} \le 
      k_4 (1 + t^{-\frac 12 -\frac n2 (\frac 1q - \frac 1p)})e^{-\alpha t}
      \lp{q}{\varphi}
    \quad \mbox{for all} \ t>0
    \] 
    holds for all $\varphi\in (W^{1,p}(\Omega))^n$. 
 \end{itemize} 
\end{lem}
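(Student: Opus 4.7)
The plan is to establish all four estimates by separating the short-time regime $t \in (0,1]$ and the long-time regime $t \in [1,\infty)$; this split matches the structure $(1 + t^{-\gamma})e^{-\alpha t}$, in which the polynomial factor absorbs the singular behavior as $t\searrow 0$ and the exponential factor encodes the spectral gap of the Neumann Laplacian on the bounded domain $\Omega$. The main analytic input is the Gaussian estimate for the Neumann heat kernel $p(x,y,t)$: since $\partial\Omega$ is smooth, there exist $C,c>0$ with
\[
0 \le p(x,y,t) \le C t^{-n/2}\,e^{-c|x-y|^2/t}
\quad\text{and}\quad
|\nabla_x p(x,y,t)| \le C t^{-(n+1)/2}\,e^{-c|x-y|^2/t}
\]
for all $x,y\in\Omega$ and $t\in(0,1]$.

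For (i), I would exploit the zero-mean assumption to place $\varphi$ in the spectral subspace corresponding to eigenvalues $\ge \alpha$. The contraction $\|e^{t\Delta}\varphi\|_{L^2(\Omega)} \le e^{-\alpha t}\|\varphi\|_{L^2(\Omega)}$ is immediate from spectral decomposition. For the smoothing aspect on $t\in(0,1]$, Young's inequality applied to the Gaussian kernel bound yields the standard $L^q\to L^p$ estimate with factor $t^{-\frac{n}{2}(1/q-1/p)}$. For $t\ge 1$, I would write $e^{t\Delta} = e^{\Delta}\circ e^{(t-1)\Delta}$ and combine the $L^2$-decay on zero-mean functions (extended to $L^r$ by interpolation) with the bounded $L^q\to L^p$ mapping of the fixed operator $e^{\Delta}$; since this latter operator no longer generates a singularity, the bound degenerates to $e^{-\alpha t}$ up to constants.

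For (ii)--(iv), the strategy is analogous but uses the gradient kernel estimate. In (ii), Young's inequality with $\nabla_x p(x,y,t)$ produces the extra factor $t^{-1/2}$ on small times; for large $t$ one splits off one unit of time and reduces matters to (i) applied to $\varphi - \bar\varphi$ (where $\bar\varphi$ denotes the mean value), using that $\nabla e^{\Delta}\bar\varphi \equiv 0$ so the constant part drops out. Part (iii) gains extra regularity from the assumption $\varphi\in W^{1,p}(\Omega)$: using a duality/integration-by-parts argument together with the Neumann boundary condition to pull the gradient through the semigroup, one may apply (i) to the zero-mean components of $\nabla\varphi$, which removes the $t^{-1/2}$ factor; the condition $q\ge 2$ is what enables the required $L^2$-based spectral reasoning. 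Part (iv) is dual to the gradient estimate, since the integral kernel of $e^{t\Delta}\nabla\cdot$ is $-\nabla_y p(x,y,t)$ (the boundary terms from integration by parts vanish by the Neumann condition on $p$ in its second argument); the same Gaussian bound then supplies the short-time factor $t^{-1/2-\frac{n}{2}(1/q-1/p)}$, while the long-time decay follows because $e^{t\Delta}\nabla\cdot\varphi$ automatically has zero mean for $t>0$.

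The main obstacle is bookkeeping rather than any single hard estimate: one must track the mean-value subtractions carefully so that the spectral gap produces the factor $e^{-\alpha t}$ uniformly across the relevant range of exponents $p,q$, and must respect the Neumann boundary conditions throughout so that integrations by parts do not generate unwanted boundary contributions that would spoil the decay. Since the analogues of these estimates on $\mathbb{R}^n$ are classical and the only substantial new ingredient for a bounded smooth domain is the spectral gap $\alpha>0$ together with the Gaussian kernel bounds, the proof proceeds without essential novelty once the short-time/long-time split is organized.
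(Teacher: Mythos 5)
First, note that the paper does not prove this lemma at all: it is quoted verbatim from the literature, with the reader referred to \cite[Lemma 2.1]{Xinru_higher} and \cite[Lemma 1.3]{win_aggregationvs}. So any comparison is with the standard proofs given there, which do follow your overall template (short-time/long-time splitting, Gaussian kernel bounds, spectral gap on the zero-mean subspace). Your treatment of (i) and (ii) is essentially the standard argument and is sound, modulo one imprecision: to get the decay rate $e^{-\alpha t}$ in $L^q$ for $q\neq 2$ you should not interpolate the $L^2$-decay with $L^1$-boundedness (that degrades the exponent to $e^{-\alpha(2-2/q)t}$); instead sandwich as $e^{t\Delta}=e^{\Delta/3}\,e^{(t-2/3)\Delta}\,e^{\Delta/3}$ and use smoothing $L^q\to L^2$, the full $L^2$-rate, then $L^2\to L^p$.

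The genuine gaps are in (iii) and (iv). For (iii), your plan to ``pull the gradient through the semigroup'' and then ``apply (i) to the zero-mean components of $\nabla\varphi$'' fails on two counts: on a bounded domain with Neumann conditions $\nabla$ does not commute with $e^{t\Delta}$ (the components of $\nabla\varphi$ do not satisfy the boundary condition, so $\nabla e^{t\Delta}\varphi\neq e^{t\Delta}\nabla\varphi$), and the components of $\nabla\varphi$ are not mean-free in general, since $\int_\Omega \partial_i\varphi = \int_{\partial\Omega}\varphi\,\nu_i$. The accepted proof instead controls $\|\nabla e^{t\Delta}\varphi\|_{L^p(\Omega)}$ by $\|(1-\Delta)^{1/2}e^{t\Delta}\varphi\|_{L^p(\Omega)}$ via the equivalence of $W^{1,p}$ with the domain of the fractional power, applies analytic-semigroup estimates, and removes the zeroth-order part of the norm with Poincar\'e applied to $\varphi-\overline{\varphi}$; the restriction $q\ge 2$ enters there, and your sketch does not account for it. For (iv), the boundary term produced by integrating by parts in $\int_\Omega p(x,y,t)\,\nabla_y\cdot\varphi(y)\,dy$ is $\int_{\partial\Omega}p(x,y,t)\,\varphi\cdot\nu\,dS_y$, which involves $p$ itself, not $\partial_{\nu_y}p$; the Neumann condition on the kernel therefore does not kill it. The identity (and likewise the claim that $e^{t\Delta}\nabla\cdot\varphi$ is mean-free, which you need for the $e^{-\alpha t}$ decay) requires either $\varphi\cdot\nu=0$ on $\partial\Omega$ or defining $e^{t\Delta}\nabla\cdot$ as the closure of the operator on $C_0^\infty(\Omega;\mathbb{R}^n)$, which is how the cited sources state it. These points need to be repaired before the sketch constitutes a proof.
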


%
%
%
%

We finally give the following result which plays an important role 
in obtaining uniform-in-$\lambda$ boundedness of solutions to \eqref{cp}. 
\begin{lem}\label{lem;estimateforufromp}
Let $\lambda>0$. 
If there exist $p>\frac{n}{2}$ and $M>0$ 
such that  
\begin{align*}
\lp{p}{\utau\cd} \le M
\quad \mbox{for all}\ t\in (0,\tmax), 
\end{align*}
then there exists $C=C(p,M)>0$ such that 
\begin{align*}
\lp{\infty}{\utau\cd} + \wmp{1,q}{\vtau\cd }\le C
\quad \mbox{for all} \ t\in (0,\tmax). 
\end{align*}
Moreover, if $p$ and $M$ are independent of $\lambda\in (0,\lambda_0)$ 
with some $\lambdaz >0$, 
then $C$ is also independent of $\lambda\in (0,\lambda_0)$. 
\end{lem}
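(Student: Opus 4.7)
The strategy is a semigroup bootstrap based on the Duhamel representations
\begin{align*}
\vtau\cd &= e^{\frac{t}{\lambda}(\Delta-1)}\vinit + \frac{1}{\lambda}\int_0^t e^{\frac{t-s}{\lambda}(\Delta-1)}\utau(\cdot,s)\,ds,\\
\utau\cd &= e^{t\Delta}\uinit - \chi\int_0^t e^{(t-s)\Delta}\nabla\cdot\bigl(\utau\nabla\vtau\bigr)(\cdot,s)\,ds,
\end{align*}
combined with the $L^p$--$L^q$ estimates of Lemma \ref{lem;estif;semi}. The critical issue for $\lambda$-uniformity is the factor $\frac{1}{\lambda}$ in the first identity; this is neutralized by the change of variables $\sigma=(t-s)/\lambda$ inside the source integral, which cancels $\frac{1}{\lambda}$ exactly and replaces the time interval $(0,t)$ by $(0,t/\lambda)\subset(0,\infty)$. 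The additional exponential $e^{-\sigma}$ coming from the $-\vtau$ term then ensures absolute integrability as $t/\lambda\to\infty$.

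Since $p>\frac{n}{2}$, I first fix $\tilde q\in(n,q]$ close enough to $n$ so that $\eta:=\frac12+\frac{n}{2}(\frac1p-\frac1{\tilde q})<1$. Applying parts (ii) and (iii) of Lemma \ref{lem;estif;semi} to the $\vtau$-representation and using $\lp{p}{\utau\cd}\le M$, the time-rescaling just described yields
\[
\lp{\tilde q}{\nabla\vtau\cd}\le k_3\,e^{-\frac{t}{\lambda}}\lp{\tilde q}{\nabla\vinit}+k_2 M\int_0^{\infty}\bigl(1+\sigma^{-\eta}\bigr)e^{-(\alpha+1)\sigma}\,d\sigma
\]
for every $t\in(0,\tmax)$ and every $\lambda>0$, i.e., a $(t,\lambda)$-uniform bound; a parallel calculation gives a uniform $L^\infty$-bound on $\vtau$ itself.

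Next, I feed $\nabla\vtau\in L^{\tilde q}$ into the Duhamel formula for $\utau$. Combining Lemma \ref{lem;estif;semi}(iv) with H\"older's inequality $\|\utau\nabla\vtau\|_{L^r(\Omega)}\le\lp{p_0}{\utau\cd}\lp{\tilde q}{\nabla\vtau\cd}$, where $\frac1r=\frac1{p_0}+\frac1{\tilde q}$, produces a uniform bound on $\lp{p_1}{\utau\cd}$ for any $p_1$ satisfying $\frac1{p_1}>\frac1r-\frac1n$. Because $\tilde q>n$ strictly, this admits a choice with $p_1-p_0$ bounded below by a positive constant depending only on $p$ and $\tilde q$; iterating from $p_0=p$, after finitely many steps one enters the regime $\frac1{p_0}<\frac1n-\frac1{\tilde q}$, in which the same argument with target exponent $\infty$ finally produces $\lp{\infty}{\utau\cd}\le C$, uniformly in $t$ and $\lambda$.

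Inserting this $L^\infty$-bound back into the $\vtau$-representation and repeating the first bootstrap with $\tilde q$ replaced by the given $q>n$ (now admissible since $\utau$ is essentially bounded) completes the $\wmp{1,q}{\vtau\cd}$ estimate. The only genuine obstacle throughout is the uniformity in $\lambda$: without the time-rescaling trick of the first paragraph, one would accumulate negative powers of $\lambda$ that destroy the limit $\lambda\searrow 0$; once that rescaling is in place, the remaining iteration is a routine adaptation of the standard parabolic-elliptic calculation.
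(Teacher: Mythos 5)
Your proposal is correct and follows essentially the same route as the paper, which does not write out the argument but defers to \cite[Lemmas 2.3 and 2.4]{Mizukami_fast_sensi} and the semigroup bootstrap of \cite[Lemma 3.2]{B-B-T-W}: a Duhamel representation for $\vtau$ with the substitution $\sigma=(t-s)/\lambda$ absorbing the factor $\frac1\lambda$, followed by the standard $L^{p_0}\to L^{p_1}\to\cdots\to L^\infty$ iteration for $\utau$ via Lemma \ref{lem;estif;semi}. The only cosmetic points are that Lemma \ref{lem;estif;semi}(ii) requires the source exponent not to exceed $\tilde q$ (harmless by H\"older on the bounded domain if $p>\tilde q$) and that part (i) needs the mean-zero decomposition $\utau=(\utau-\ol{\uinit})+\ol{\uinit}$ when bounding $\lp{q}{\vtau}$; neither affects the validity of the argument.
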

\begin{proof}
The proof is a combination of \cite[Lemmas 2.3 and 2.4]{Mizukami_fast_sensi} 
(the proof is based on an application of the $L^p$-$L^q$ estimates for 
the Neumann heat semigroup in the proof of \cite[Lemma 3.2]{B-B-T-W}). 
\end{proof}

%
%

\section{Uniform-in-$\lambda$ boundedness}

In this section we establish global existence of solutions to \eqref{cp} 
and their uniform-in-$\lambda$ boundedness. 

\subsection{The higher-dimensional setting}
In this subsection we will deal with 
the higher-dimensional Keller--Segel system \eqref{cp}. 
Aided by Lemma \ref{lem;estimateforufromp}, 
we shall only verify the $L^{p_0}$-estimate for $\utau$ with some $p_0>\frac n2$. 
We first prove the following lemma 
which enables us to pick appropriate constants 
in the proof of Lemma \ref{lem;Lpesti;HD}. 

%
%
%
%
\begin{lem}\label{lem;constant}
Let $p>\frac n2$ and $q>n$. 
Then there are constants $\theta,q_0,\mu>0$ such that 
\begin{align*}
 &\theta \in 
   I_1 := \left(
     p, \min\left\{ 
         \frac{npq}{(np+nq-pq)_+}, 
         \frac{np}{2(n-p)_+}
       \right\}
   \right), 
\\
 &q_0 \in 
   I_2 := \left(
     \max\left\{
       1, \frac{np\theta}{p\theta +np -n\theta}
       \right\}, 
     \min\left\{
       q, \frac{np}{(n-p)_+}
       \right\}
   \right),
\\
 &\mu \in 
   I_3 := \left(
     \max\left\{ 
       1, 
       \frac{n\theta}{n+\theta}, 
       \frac{np\theta}{p\theta + 2np -n\theta} 
     \right\}, 
   \min\left\{
       q_0, 
       \frac{q_0 \theta}{q_0 + \theta}
     \right\}
   \right). 
\end{align*}
\end{lem}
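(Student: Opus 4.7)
The lemma is a purely algebraic claim about the non-emptiness of three nested intervals whose endpoints are functions of $n,p,q$ and of earlier choices. My plan is to verify $I_1\neq\emptyset$, then for any $\theta\in I_1$ that $I_2\neq\emptyset$, and then for any $q_0\in I_2$ that $I_3\neq\emptyset$. Each step reduces, after cross-multiplication, to inequalities that trace back to the hypotheses $p>n/2$, $q>n$, $n\geq 3$, or to constraints already encoded in $I_1$ or $I_2$.

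For $I_1$, the two strict inequalities $p<np/(2(n-p)_+)$ and $p<npq/(np+nq-pq)_+$ reduce by elementary algebra to $p>n/2$ and $q>n$ respectively, the degenerate cases being trivial since the upper bound becomes $+\infty$. For $I_2$, fix $\theta\in I_1$; since $\theta>p$, one checks $np\theta/(p\theta+np-n\theta)>n>1$, so this term dominates the max in the lower bound. The comparisons $np\theta/(p\theta+np-n\theta)<q$ and $np\theta/(p\theta+np-n\theta)<np/(n-p)_+$ reduce precisely to $\theta<npq/(np+nq-pq)$ and $\theta<np/(2(n-p))$, the two upper bounds in $I_1$, and are therefore automatic.

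For $I_3$, fix $q_0\in I_2$. The comparisons against the upper bound $q_0$ are automatic: $n\theta/(n+\theta)<n<q_0$, and $np\theta/(p\theta+2np-n\theta)<np\theta/(p\theta+np-n\theta)<q_0$ by the $I_2$ bound. Against the other upper bound $q_0\theta/(q_0+\theta)$: the inequality $1<q_0\theta/(q_0+\theta)$ is equivalent to $1/\theta+1/q_0<1$, which holds because $\theta>p>n/2$ and $q_0>n$ give $1/\theta+1/q_0<3/n\leq 1$ for $n\geq 3$, with strictness inherited from the strict inequalities on $\theta$ and $q_0$; next, $n\theta/(n+\theta)<q_0\theta/(q_0+\theta)\iff n<q_0$, known from the previous stage; the remaining comparison $np\theta/(p\theta+2np-n\theta)<q_0\theta/(q_0+\theta)$ reduces to $\theta(np+q_0(n-p))<npq_0$, which in the limit $\theta\to p^+$, $q_0\to q^-$ collapses to the hypothesis $n<q$. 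By continuity, for $\theta$ sufficiently close to $p$ within $I_1$ and $q_0$ sufficiently close to $\min\{q,np/(n-p)_+\}$ within $I_2$, this comparison holds strictly, and any $\mu$ in the resulting non-empty interval works.

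The main obstacle is this final comparison in the verification of $I_3$, which forces a compatible choice of $\theta$ near $p$ and $q_0$ near its upper bound rather than independent picks from $I_1$ and $I_2$; once it is handled via the limit and continuity argument, everything else is routine bookkeeping of algebraic manipulations.
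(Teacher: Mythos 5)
Your verification that $I_1\neq\emptyset$ and $I_2\neq\emptyset$ matches the paper's proof: both reduce the endpoint comparisons to the hypotheses $p>\frac n2$, $q>n$ and to the two upper bounds defining $I_1$, and your cross-multiplications there are correct. The divergence is in the last comparison for $I_3$, namely
\[
\frac{np\theta}{p\theta+2np-n\theta}<\frac{q_0\theta}{q_0+\theta},
\]
which you single out as the main obstacle and handle by a perturbation argument (taking $\theta$ near $p$ and $q_0$ near the upper end of $I_2$). In fact this comparison holds for \emph{every} $\theta\in I_1$ and \emph{every} $q_0\in I_2$, with no coordination needed: it is equivalent to $np(\theta+q_0)<q_0(p\theta+2np-n\theta)$, and adding $npq_0$ to both sides of $np\theta<q_0(p\theta+np-n\theta)$ --- which is nothing but the statement $q_0>\frac{np\theta}{p\theta+np-n\theta}$, i.e.\ the lower bound of $I_2$ --- yields exactly this. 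That one-line deduction is what the paper does. Your route still proves the lemma, since only existence of the three constants is claimed, but it carries a small gap: you compute the limit of $\theta\bigl(np+q_0(n-p)\bigr)<npq_0$ only as $q_0\to q^-$, whereas the upper endpoint of $I_2$ may be $\frac{np}{(n-p)_+}<q$, in which case $q_0$ cannot approach $q$ and the relevant limit is $q_0\to\frac{np}{n-p}$; there the inequality collapses to $2(n-p)<n$, i.e.\ $p>\frac n2$, which also holds, so the branch is fixable but as written unverified. The direct algebraic deduction from the lower bound of $I_2$ avoids the case split and the continuity argument entirely.
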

\begin{proof}
Since we have from the conditions 
$q>n$ and $p>\frac n2$ that 
\[
  (np+nq-pq) p < npq 
\quad \mbox{and} \quad 
  2(n-p) < n, 
\]
we can verify that  
\[
I_1 = \left(
     p, \min\left\{ 
         \frac{npq}{(np+nq-pq)_+}, 
         \frac{np}{2(n-p)_+}
       \right\}
   \right)
   \neq \emptyset. 
\]
Thus we can take $\theta \in I_1$. We next see that 
\[
I_2 =  \left(
     \max\left\{
       1, \frac{np\theta}{p\theta +np -n\theta}
       \right\}, 
     \min\left\{
       q, \frac{np}{(n-p)_+}
       \right\}
   \right)\neq \emptyset. 
\]
Noticing from the fact $\theta \in I_1 \subset (p,\frac{np}{(n-p)_+})$ that 
\[
  p\theta + np -n\theta >0 
\quad \mbox{and} \quad   
  \frac{np\theta}{p\theta + np - n\theta} \ge 1, 
\]
we will only confirm that 
\begin{align}\label{check;I_2}
\frac{np\theta}{p\theta +np -n\theta} < \min\left\{
       q, \frac{np}{(n-p)_+}
       \right\}
\end{align} 
holds. 
Here since the fact $\theta < \min\{\frac{npq}{(np+nq-pq)_+}, \frac{np}{2(n-p)_+}\}$ 
implies 
\[
  np\theta < q (p\theta + np -n\theta) 
\quad \mbox{and} \quad 
  \theta (n-p) < p\theta + np - n\theta, 
\]
we can verify that \eqref{check;I_2} is true, 
which tells us that $I_2\neq \emptyset$. 
Therefore we can choose $q_0 \in I_2$. 
We finally confirm that 
\[
I_3 = \left(
     \max\left\{ 
       1, 
       \frac{n\theta}{n+\theta}, 
       \frac{np\theta}{p\theta + 2np -n\theta} 
     \right\}, 
   \min\left\{
       q_0, 
       \frac{q_0 \theta}{q_0 + \theta}
     \right\}
   \right) \neq \emptyset.  
\]
Here we note that $p\theta + 2np -n\theta > p\theta + np -n\theta > 0$ 
and $\frac{q_0 \theta}{q_0+\theta} < q_0$. 
Since the facts $\theta >p >\frac n2 \ge \frac{n}{n-1}$ ($n\ge 3$) and 
$p\theta + 2np -n\theta < p(n+\theta)$ 
derive that 
\[
  1\le \frac{n\theta}{n+\theta} 
  < \frac{np\theta}{p\theta + 2np -n\theta}, 
\]
we shall only see that 
\begin{align}\label{check;I_3}
 \frac{np\theta}{p\theta + 2np -n\theta} < \frac{q_0 \theta}{q_0+\theta}. 
\end{align}
Now aided by the relation $\frac{np\theta}{p\theta + np -n\theta} < q_0$, 
we establish that 
\[
  np(\theta+q_0) < q_0 (p\theta + 2np -n\theta)
\]
holds. Therefore we have \eqref{check;I_3}, which enables us to find 
a constant $\mu \in I_3$. 
This completes the proof. 
\end{proof}

%
%
%
%

Then we can show the following lemma which entails the desired estimate for $\utau$. 
\begin{lem}\label{lem;Lpesti;HD}
Let $p>\frac n2$. 
Then there exists a positive constant 
$\ep_0 = \ep_0(p,q,\chi,|\Omega|)$ 
such that, if $\uinit$  and $\vinit$ satisfy 
  \[
    \lp{p}{\uinit} < \ep_0 
  \quad \mbox{and} \quad 
    \lp{q}{\nabla \vinit} < \ep_0, 
  \]
then there exist $p_0>\frac n2$ and 
$C>0$ which are independent of $\lambda>0$ such that 
  \[
    \lp{p_0}{\utau\cd} \le C 
  \]
for all $t\in (0,\tmax)$ and all $\lambda>0$. 
\end{lem}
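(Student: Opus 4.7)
The plan is to implement the bootstrap indicated in the introduction. Fix $p>\tfrac n2$ and $q>n$, use Lemma~\ref{lem;constant} to select exponents $\theta\in I_1$, $q_0\in I_2$, $\mu\in I_3$, and set $p_0:=p$. For $\varepsilon,\varepsilon_0>0$ to be determined, assume $\lp{p}{\uinit}<\varepsilon_0$ and $\lp{q}{\nabla\vinit}<\varepsilon_0$ and define
\[
\tlam:=\sup\bigl\{\hat T\in(0,\tmax)\,\big|\,\lp{\theta}{\utau\cd-e^{t\Delta}\uinit}<\varepsilon\ \text{for all } t\in(0,\hat T)\bigr\}.
\]
Continuity of $\utau$ together with $\utau(\cdot,0)=\uinit$ forces $\tlam>0$, and the goal is to show $\tlam=\tmax$ once $\varepsilon,\varepsilon_0$ are small enough.

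On $(0,\tlam)$ I would apply the Duhamel formula $\utau\cd-e^{t\Delta}\uinit=-\chi\int_0^t e^{(t-s)\Delta}\nabla\!\cdot\!(\utau\nabla\vtau)(\cdot,s)\,ds$ together with Lemma~\ref{lem;estif;semi}(iv) at the index $q_1$ defined by $\tfrac1{q_1}=\tfrac1{q_0}+\tfrac1q$ and Hölder's inequality to obtain
\[
\lp{\theta}{\utau\cd-e^{t\Delta}\uinit}\le \chi k_4\!\int_0^t\!\bigl(1+(t-s)^{-\gamma_1}\bigr)e^{-\alpha(t-s)}\lp{q_0}{\utau(\cdot,s)}\lp{q}{\nabla\vtau(\cdot,s)}\,ds,
\]
with $\gamma_1=\tfrac12+\tfrac n2(\tfrac1{q_1}-\tfrac1\theta)<1$ by the upper bound in $I_1$. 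In parallel, treating the $\vtau$-equation via the semigroup $e^{(\Delta-1)t/\lambda}$, the substitution $\sigma=(t-s)/\lambda$, and parts (ii)--(iii) of Lemma~\ref{lem;estif;semi} yields
\[
\lp{q}{\nabla\vtau\cd}\le k_3\lp{q}{\nabla\vinit}+k_2\sup_{s\in(0,t)}\lp{\mu}{\utau(\cdot,s)}\!\int_0^\infty\!\bigl(1+\sigma^{-\gamma_2}\bigr)e^{-\sigma}\,d\sigma,
\]
with $\gamma_2=\tfrac12+\tfrac n2(\tfrac1\mu-\tfrac1q)<1$ by the lower bound in $I_3$, so the $\sigma$-integral is finite and independent of $\lambda$.

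To close, the auxiliary norms $\lp{q_0}{\utau}$ and $\lp{\mu}{\utau}$ are controlled through the splitting $\utau(\cdot,s)=e^{s\Delta}\uinit+(\utau(\cdot,s)-e^{s\Delta}\uinit)$: the first summand is estimated via Lemma~\ref{lem;estif;semi}(i) by $\lp{p}{\uinit}<\varepsilon_0$ multiplied by a temporal weight $s^{-\gamma_3}$ whose integrability against $(t-s)^{-\gamma_1}$ is guaranteed by the Sobolev-type upper bound $q_0<\tfrac{np}{(n-p)_+}$, while the second summand is either interpolated from the defining bound in $L^\theta$ (permissible for $\mu$, since $\mu<\theta$ by $I_3$) or handled by applying the Duhamel estimate analogously in $L^{q_0}$, the shape of $I_2$ being precisely what makes this second bootstrap close. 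Combined with mass conservation $\lp{1}{\utau}=\lp{1}{\uinit}\le|\Omega|^{1-1/p}\varepsilon_0$, this produces a self-improving inequality of the form
\[
\lp{\theta}{\utau\cd-e^{t\Delta}\uinit}\le C_\ast(\varepsilon+\varepsilon_0)^2
\]
with $C_\ast=C_\ast(p,q,\chi,|\Omega|)$. Choosing $\varepsilon$ small and then $\varepsilon_0<\varepsilon$ so that $4C_\ast\varepsilon<\tfrac12$ makes the right-hand side strictly less than $\varepsilon/2$, contradicting the maximality of $\tlam$ unless $\tlam=\tmax$. Hölder's inequality ($p<\theta$) together with the $L^p$-contractivity $\lp{p}{e^{t\Delta}\uinit}\le\lp{p}{\uinit}$ then delivers the asserted uniform bound $\lp{p}{\utau\cd}\le|\Omega|^{1/p-1/\theta}\varepsilon+\lp{p}{\uinit}\le C$. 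The main obstacle is the second bootstrap for $\lp{q_0}{\utau}$ in the regime $q_0>\theta$ (genuinely allowed by $I_2$): two coupled Duhamel estimates in $L^\theta$ and $L^{q_0}$ must be closed simultaneously, and the joint integrability condition $\gamma_1+\gamma_3<1$ must be verified for every combination that arises, which is precisely why the intricate three-nested-intervals arithmetic of Lemma~\ref{lem;constant} is unavoidable.
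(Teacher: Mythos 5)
Your overall scaffold (the set $\tlam$, the smallness bootstrap, the final H\"older step) matches the paper's, but the way you pair the exponents in the two Duhamel estimates breaks down, and this is not a cosmetic issue. First, you estimate $\lp{q}{\nabla\vtau\cd}$ by feeding $\lp{\mu}{\utau(\cdot,s)}$ into Lemma \ref{lem;estif;semi}(ii), which produces the weight $\sigma^{-\gamma_2}$ with $\gamma_2=\tfrac12+\tfrac n2(\tfrac1\mu-\tfrac1q)$. You claim $\gamma_2<1$ ``by the lower bound in $I_3$'', but that lower bound ($\mu>\tfrac{n\theta}{n+\theta}$) only gives $\tfrac1\mu-\tfrac1\theta<\tfrac1n$, not $\tfrac1\mu-\tfrac1q<\tfrac1n$. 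Since $I_3$ forces $\mu<\tfrac{q_0\theta}{q_0+\theta}<\theta$, and $\theta$ can be pinned arbitrarily close to $p$ (hence close to $\tfrac n2$) by the upper bound $\theta<\tfrac{np}{2(n-p)_+}$, one gets $\tfrac1\mu>\tfrac1{q_0}+\tfrac1\theta>\tfrac2n$ in that regime, so $\tfrac1\mu-\tfrac1q>\tfrac2n-\tfrac1q>\tfrac1n$ for every $q>n$, i.e.\ $\gamma_2>1$ and the $\sigma$-integral diverges. There is no admissible choice in Lemma \ref{lem;constant} that rescues this pairing: the intervals are built for a different one. The paper never estimates $\nabla\vtau$ in $L^q$; it estimates it only in $L^{q_0}$, fed by $\lp{p}{\utau}$, where $q_0<\tfrac{np}{(n-p)_+}$ is exactly the condition making $\tfrac12+\tfrac n2(\tfrac1p-\tfrac1{q_0})<1$.

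Second, your $u$-equation estimate requires $\lp{q_0}{\utau}$, and you concede yourself that when $q_0>\theta$ (which $I_2$ permits, since its upper end $\min\{q,\tfrac{np}{(n-p)_+}\}$ can exceed the upper end $\tfrac{np}{2(n-p)_+}$ of $I_1$) this cannot be interpolated from the defining $L^\theta$ bound and would need a second, coupled bootstrap that you do not carry out. This is precisely the point where the proof must close and where yours does not. The paper sidesteps the need for $\lp{q_0}{\utau}$ entirely by the opposite H\"older split
\[
\lp{\mu}{\utau\nabla\vtau}\le\lp{\frac{q_0\mu}{q_0-\mu}}{\utau}\,\lp{q_0}{\nabla\vtau},
\]
where $\mu<\tfrac{q_0\theta}{q_0+\theta}$ guarantees $1<\tfrac{q_0\mu}{q_0-\mu}<\theta$, so that $\lp{\frac{q_0\mu}{q_0-\mu}}{\utau}$ is controlled by interpolating between the conserved mass $\lp{1}{\utau}$ and the single bootstrap quantity $\lp{\theta}{\utau\cd-e^{t\Delta}\uinit}$ (plus the semigroup terms $e^{t\Delta}(\uinit-\ol{\uinit})$ and $e^{t\Delta}\ol{\uinit}$). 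With that pairing only one quantity is bootstrapped, the remaining time-singularity condition is exactly $\mu>\tfrac{np\theta}{p\theta+2np-n\theta}$ from $I_3$, and the argument closes with $p_0=\theta$. To repair your proof you should swap your two H\"older pairings to match this structure; as written, the $L^q$ gradient estimate fails and the $L^{q_0}$ bound on $\utau$ is missing.
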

\begin{proof}
Let $\ep > 0$ be a constant fixed later, and assume that 
$\uinit$ and $\vinit$ satisfy 
\begin{align}\label{condi;HD;lem}
  \lp{p}{\uinit} \le \ep 
\quad  \mbox{and} \quad 
  \lp{q}{\nabla \vinit} \le \ep 
\end{align}
with some $p > \frac n2$. 
Then invoking to Lemma \ref{lem;constant}, 
we can take $\theta, q_0, \mu \ge 1$ such that 
\[
  \theta \in I_1, \quad q_0 \in I_2 \quad \mbox{and} \quad \mu \in I_3, 
\]
where $I_1,I_2,I_3$ are intervals defined in Lemma \ref{lem;constant}. 
Now we put 
\begin{align*}
 \tlam := \sup \left\{\hat{T}\in (0,\tmax)\, {\big |}\, \lp{\theta}{\utau\cd - e^{t\Delta}\uinit }< \ep \quad   
 \mbox{for all} \ t\in (0,\hat{T})\right\}.  
\end{align*}
Then because $\utau (\cdot, 0) - e^{0\cdot \Delta} \uinit = 0$ and 
the function $t\mapsto \utau \cd - e^{t\Delta} \uinit$ is continuous 
on $[0,\tmax)$, 
$\tlam$ is well-defined and positive with $\tlam \le \tmax$. 
We first note from the standard $L^p$-$L^q$ estimate for the Neumann heat semigroup 
that there is $C_1 = C_1(|\Omega|) >0$ 
such that for all $r\in [1,\theta)$, 
\begin{align}\notag
  \lp{r}{\utau\cd} 
  & \le \lp{r}{\utau\cd - e^{t\Delta} \uinit} + \lp{r}{e^{t\Delta}\uinit} 
\\\notag
  &\le |\Omega|^{\frac 1r -\frac1\theta} \lp{\theta}{\utau\cd - e^{t\Delta} \uinit} + 
     C_1 \lp{r}{\uinit}
\\\label{esti;Lr;u}
  &\le  |\Omega|^{\frac 1r -\frac1\theta} \ep + C_1 \lp{r}{\uinit}
\end{align}
for all $t\in (0,\tlam)$, 
which with the relation $p<\theta$ and  
\eqref{condi;HD;lem} tells us that 
\[
  \lp{p}{\utau\cd} \le 
  (|\Omega|^{\frac 1p -\frac1\theta} + C_1) \ep 
\]
for all $t\in (0,\tlam)$. 
We then obtain from the variation-of-constants representation for $\vtau$, 
the fact $q_0 < q$ and Lemma \ref{lem;estif;semi} (ii), (iii) that 
\begin{align*}
  \lp{q_0}{\nabla \vtau\cd } &\le 
  \lp{q_0}{\nabla e^{\frac t\lambda (\Delta -1)}\vinit} 
  + \frac 1\lambda 
      \int_0^t \lp{q_0}{\nabla e^{\frac{t-s}{\lambda} (\Delta-1)} \utau(\cdot,s)}\,ds  
\\
  &\le C_2 \lp{q_0}{\nabla \vinit} 
    + \frac{C_3 \ep}{\lambda} \int_0^t \left( 
      1+\left(\frac{t-s}{\lambda}\right)^{-\frac 12 -\frac n2(\frac 1p -\frac 1{q_0})}    
    \right) e^{-\alpha (\frac{t-s}{\lambda})}\,ds
\\
  &\le C_2 |\Omega|^{\frac 1{q_0}-\frac 1q}\lp{q}{\nabla \vinit} 
  + C_3 \ep \int_0^{\frac{t}{\lambda}} \left( 
      1+\sigma^{-\frac 12 -\frac n2(\frac 1p -\frac 1{q_0})}    
      \right) e^{-\alpha \sigma}\,d\sigma 
\end{align*}
for all $t\in (0,\tlam)$ with some $C_2  = C_2(|\Omega|)>0$ and $C_3 = C_3(p,q,|\Omega|) > 0$. 
Since the fact $q_0 < \frac{np}{(n-p)_+}$ implies 
$\frac 12 + \frac n2(\frac 1p -\frac 1{q_0})<1$, 
from \eqref{condi;HD;lem} we infer that 
\begin{align*}
  C_2& |\Omega|^{\frac 1{q_0}-\frac 1q}\lp{q}{\nabla \vinit} 
  + C_3 \ep \int_0^{\frac{t}{\lambda}} \left( 
      1+\sigma^{-\frac 12 -\frac n2(\frac 1p -\frac 1{q_0})}    
      \right) e^{-\alpha \sigma}\,d\sigma \le C_4 \ep
\end{align*} 
holds with $C_4:= C_2 |\Omega|^{\frac 1{q_0}-\frac 1q}  
  + C_3 \int_0^{\infty} ( 
        1+\sigma^{-\frac 12 -\frac n2(\frac 1p -\frac 1{q_0})}    
      ) e^{-\alpha \sigma}\,d\sigma < \infty$, which means that 
\begin{align}\label{esti;nablav;Lq0}
  \lp{q_0}{\nabla \vtau \cd} \le C_4 \ep 
\end{align}
for all $t\in (0,\tlam)$. 
Finally, in order to show $\tlam=\tmax$, we will show that 
\[
  \lp{\theta}{\utau\cd -e^{t\Delta} \uinit} \le C\ep
  \quad 
  \mbox{for all} \ t\in (0,\tlam)
\]
with some $C<1$. 
Employing the variation-of-constant formula for $\utau$ and 
Lemma \ref{lem;estif;semi} (iv), we see that
\begin{align}\label{esti;u-uinit;1Step}
  \lp{\theta}{\utau \cd -e^{t\Delta} \uinit} 
\le 
  \chi \int_0^t ( 1 + (t-s)^{-\frac 12-\frac{n}{2}(\frac 1\mu -\frac 1\theta)})
  e^{-\alpha (t-s)} \lp{\mu}{\utau (\cdot,s) \nabla \vtau (\cdot,s)}\, ds
\end{align} 
Now thanks to the facts $\mu<q_0$, $1< \frac{q_0 \mu}{q_0-\mu} < \theta$  and 
\eqref{esti;Lr;u}--\eqref{esti;nablav;Lq0}, 
we derive from the H\"older inequality and the interpolation inequality that 
\begin{align}\label{esti;Lmu;unablav} \notag
  &\lp{\mu}{\utau(\cdot,s) \nabla \vtau(\cdot,s)} 
\\\notag
  &\le 
  \lp{\frac{q_0 \mu}{q_0-\mu}}{\utau (\cdot,s)} \lp{q_0}{\nabla \vtau (\cdot,s)}  
\\ \notag
  &\le 
  \lp{1}{\utau (\cdot,s)}^a\lp{\theta}{\utau (\cdot,s)}^{1-a} \lp{q_0}{\nabla \vtau (\cdot,s)} 
\\ 
  &\le 
  C_5 \ep^{1+a} \left(\lp{\theta}{\utau(\cdot,s)-e^{s\Delta}\uinit}^{1-a} 
    + \lp{\theta}{e^{s\Delta}(\uinit-\ol{\uinit})}^{1-a}
    + \lp{\theta}{e^{s\Delta}\ol{\uinit}}^{1-a}\right) 
\end{align}
with some $C_5=C_5(p,q,|\Omega|) > 0$,
where $a=\frac{(q_0-\mu)\theta -q_0 \mu}{q_0\mu (\theta-1)}\in (0,1)$ 
and $\ol{\uinit}:= \frac{1}{|\Omega|}\into \uinit$. 
Here from the H\"older inequality, the Young inequality 
and Lemma \ref{lem;estif;semi} (i) 
we can find $C_6 =C_6(p,q,|\Omega|)>0$ and 
$C_7 =C_7 (p,q,|\Omega|) >0$ such that 
\begin{align}\label{esti;firstterm}
  \lp{\theta}{e^{s\Delta}\ol{\uinit}}^{1-a} = \lp{\theta}{\ol{\uinit}}^{1-a} 
  \le C_6 \lp{p}{\uinit}^{1-a} \le C_6 \ep^{1-a} 
\end{align}
and 
\begin{align}\label{esti;secondterm}\notag
  \lp{\theta}{e^{s\Delta}(\uinit-\ol{\uinit})}^{1-a} 
  &\le (1-a) \lp{\theta}{e^{s\Delta}(\uinit-\ol{\uinit})} + a 
\\\notag
  &\le C_7 (1+s^{-\frac n2 (\frac 1p - \frac 1\theta)})e^{-\alpha s}
     \lp{p}{\uinit - \ol{\uinit}} 
     +a
\\
  &\le C_7(1+|\Omega|^{-1+\frac 1p})\ep 
     (1+s^{-\frac n2 (\frac 1p - \frac 1\theta)})e^{-\alpha s} +a.    
\end{align}
Since the fact $\frac 12 + \frac n2 (\frac 1\mu -\frac 1\theta) <1$ leads to 
$\int_0^\infty (1+\sigma^{-\frac 12 - \frac n2 (\frac 1\mu -\frac 1\theta)})e^{-\alpha \sigma} \,d\sigma <\infty$ and  the relation 
$
  1-\frac 12 -\frac n2 (\frac 1\mu -\frac 1\theta ) - \frac n2(\frac 1p - \frac 1\theta)>0
$
derives from \cite[Lemma 1.2]{win_aggregationvs} that  
\begin{align*}
  &\int_0^t ( 1 + (t-s)^{-\frac 12-\frac{n}{2}(\frac 1\mu -\frac 1\theta)})e^{-\alpha (t-s)}  
  (1+s^{-\frac n2 (\frac 1p - \frac 1\theta)})e^{-\alpha s}\, ds 
\\
  &\le C_8 (1 + t^{\min\{0, 1-\frac 12 -\frac n2 (\frac 1\mu -\frac 1\theta ) - \frac n2(\frac 1p - \frac 1\theta)\}})e^{-\alpha t}\le 2C_8   
\end{align*}
for all $t >0$ with some $C_8=C_8(p,q)>0$, 
plugging \eqref{esti;firstterm} and \eqref{esti;secondterm} 
into \eqref{esti;u-uinit;1Step} and \eqref{esti;Lmu;unablav} implies that 
\begin{align*}
  \lp{\theta}{\utau \cd -e^{t\Delta} \uinit} 
&\le 
  C_9 \ep^{1+a}\left(
    \sup_{s\in (0,\tlam)} \lp{\theta}{\utau(\cdot,s)-e^{s\Delta}\uinit}^{1-a} + \ep +1 + \ep^{1-a}
  \right)
\\ 
&\le 
 C_{9} \ep^{1+a} (2\ep^{1-a} + \ep +1) 
\end{align*}
for all $t\in (0,\tlam)$ with some $C_9 = C_9(p,q,\chi,|\Omega|)>0$.  
Thus if we take $\ep>0$ satisfying  
\[
  C_{9} \ep^{a} (2\ep^{1-a} + \ep +1) <1, 
\]
then the continuity of 
the function $t\mapsto \lp{\theta}{\utau\cd - e^{t\Delta} \uinit}$ 
concludes that 
\[
  \tlam=\tmax, 
\]
which namely means that 
\begin{align}\label{esti;Lp;u-eunit}
\lp{\theta}{\utau(\cdot,t)-e^{t\Delta}\uinit} \le \ep 
\end{align} 
for all $t\in (0,\tmax)$. 
Here, since $\ep > 0$ is independent of $\lambda >0$, 
we note that \eqref{esti;Lp;u-eunit} holds for all $t \in (0,\tmax)$ and all $\lambda>0$, 
which together with the maximum principle 
\[
  \lp{\infty}{e^{t\Delta} \uinit}\le \lp{\infty}{\uinit}
  \quad \mbox{for all} \ t>0
\] 
enables us to see that 
\begin{align}\label{esti;Ltheta;u}
  \lp{\theta}{\utau\cd} \le \ep + |\Omega|^{\frac{1}{\theta}} \lp{\infty}{\uinit}
\end{align} 
for all $t\in (0,\tmax)$ and all $\lambda >0$. 
Noticing that $\theta > p > \frac{n}{2}$ holds and 
$\theta$, $\ep$ are independent of $\lambda$, 
from \eqref{esti;Ltheta;u} we can attain the goal of the proof.   
\end{proof}

Here we are in the position to 
prove global existence and uniform-in-$\lambda$ 
boundedness in the higher-dimensional Keller--Segel system \eqref{cp}.  

%
%
%
%
\begin{lem}\label{lem;boundedness;HD}
Let $p>\frac{n}{2}$. 
Assume that $\uinit$ and $\vinit$ satisfy 
  \[
    \lp{p}{\uinit} < \ep_0 
  \quad \mbox{and} \quad 
    \lp{q}{\nabla \vinit} < \ep_0, 
  \]
where $\ep_0$ is the constant 
defined in Lemma \ref{lem;Lpesti;HD}. 
Then $\tmax =\infty$ holds, and 
there exists $C>0$ independent of $\lambda>0$ such that 
  \[  
    \lp{\infty}{\utau\cd} \le C
  \]
for all $t\in (0,\infty)$ and all $\lambda > 0$. 
\end{lem}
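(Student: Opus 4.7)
The plan is to combine the $L^{p_0}$ estimate supplied by Lemma \ref{lem;Lpesti;HD} with the regularization statement of Lemma \ref{lem;estimateforufromp} and then invoke the extensibility criterion of Lemma \ref{lem;localexistence} to rule out finite-time blow-up. In this sense the hard analytical work has already been done in the preceding lemma, and what remains is essentially an assembly of ingredients.

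More concretely, I would first apply Lemma \ref{lem;Lpesti;HD} to the initial data $\uinit,\vinit$ satisfying the smallness assumption $\lp{p}{\uinit}<\ep_0$ and $\lp{q}{\nabla \vinit}<\ep_0$. This yields some $p_0>\frac n2$ and some constant $M>0$, both independent of $\lambda>0$, such that
\[
  \lp{p_0}{\utau\cd}\le M \qquad \text{for all }t\in(0,\tmax)\text{ and all }\lambda>0.
\]
Since $p_0>\frac n2$, the hypothesis of Lemma \ref{lem;estimateforufromp} is met, with $p_0$ and $M$ independent of $\lambda$ on the whole range $\lambda\in(0,\infty)$. Applying that lemma gives a constant $C>0$, also independent of $\lambda>0$, such that
\[
  \lp{\infty}{\utau\cd}+\wmp{1,q}{\vtau\cd}\le C \qquad \text{for all }t\in(0,\tmax)\text{ and all }\lambda>0.
\]

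The last step is to promote this bound on $(0,\tmax)$ to a bound on $(0,\infty)$. By Lemma \ref{lem;localexistence}, either $\tmax=\infty$ or $\limsup_{t\to\tmax}\bigl(\lp{\infty}{\utau\cd}+\|\vtau\cd\|_{W^{1,q}(\Omega)}\bigr)=\infty$. The estimate just obtained rules out the second alternative, hence $\tmax=\infty$ for every $\lambda>0$, and the bound by $C$ now holds for all $t\in(0,\infty)$ and all $\lambda>0$, completing the proof.

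Regarding obstacles: at this stage there really is no obstacle beyond careful bookkeeping, because the delicate point --- the fact that $\ep>0$ chosen in the proof of Lemma \ref{lem;Lpesti;HD} can be taken independent of $\lambda$, so that the resulting $L^{p_0}$ bound is uniform in $\lambda$ --- has already been handled. I would only take care that the constants $p_0$ and $M$ supplied by Lemma \ref{lem;Lpesti;HD} are indeed independent of $\lambda$ (this is explicit in the statement) so that the ``moreover'' part of Lemma \ref{lem;estimateforufromp} applies and the final constant $C$ inherits the same uniformity.
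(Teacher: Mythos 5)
Your proposal is correct and follows exactly the paper's own (one-line) proof: apply Lemma \ref{lem;Lpesti;HD} to get a uniform-in-$\lambda$ $L^{p_0}$ bound with $p_0>\frac n2$, feed it into Lemma \ref{lem;estimateforufromp} to obtain the $L^\infty$ and $W^{1,q}$ bounds, and use the extensibility criterion of Lemma \ref{lem;localexistence} to conclude $\tmax=\infty$. No further comment is needed.
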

\begin{proof}
A combination of  Lemmas \ref{lem;estimateforufromp} 
and \ref{lem;Lpesti;HD}, along with the extensibility criterion 
directly leads to this lemma. 
\end{proof}

%
%
\subsection{The 2-dimensional setting} 

In this subsection we will show uniform-in-$\lambda$ boundedness in 
the $2$-dimensional Keller--Segel system. The proof is mainly based on 
arguments in the proof of \cite[Theorem 1.1]{Nagai-Senba-Yoshida}.  
Thus we will only give short proofs.  

\begin{lem}\label{lem;Lpesti;2D}
Assume that $\uinit$ satisfies 
\[
  \lp{1}{\uinit} < \frac{4\pi}{\chi}. 
\]
Then for all $\lambda_0 >0$ there exists $C>0$ such that 
\[
  \lp{2}{\utau\cd} \le C 
\]
for all $t\in (0,\tmax)$ and all $\lambda \in (0,\lambda_0)$. 
\end{lem}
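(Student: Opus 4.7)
The plan is to follow Nagai--Senba--Yoshida \cite{Nagai-Senba-Yoshida}, monitoring that every constant can be chosen independently of $\lambda\in(0,\lambda_0)$. First, I would introduce the Lyapunov functional
\[
F_\lambda(t) := \into \utau\log \utau - \chi\into \utau\vtau + \frac{\chi}{2}\into\bigl(|\nabla \vtau|^2 + \vtau^2\bigr).
\]
Testing the first equation of \eqref{cp} with $\log \utau + 1 - \chi \vtau$ and using the second equation $\lambda(\vtau)_t=\Delta\vtau-\vtau+\utau$ to substitute for the residual time derivatives yields, after integration by parts, the dissipation identity
\[
\frac{d}{dt}F_\lambda = -\into \utau\bigl|\nabla(\log \utau - \chi \vtau)\bigr|^2 - \chi\lambda\into (\vtau)_t^2 \le 0.
\]
Since $\uinit\in C(\ol{\Omega})$ and $\vinit\in W^{1,q}(\Omega)\hookrightarrow L^\infty(\Omega)$ (recall $q>2$), and since $\lambda$ does not appear in $F_\lambda(0)$, it follows that $F_\lambda(t)\le F_\lambda(0)\le C$ uniformly in $t\in(0,\tmax)$ and $\lambda\in(0,\lambda_0)$.

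Next, mass conservation gives $\lp{1}{\utau\cd}=\lp{1}{\uinit}=:m$ with $\chi m<4\pi$. The two-dimensional Moser--Trudinger inequality, in the form employed in \cite{Nagai-Senba-Yoshida}, then provides $\delta>0$ and $C_1>0$ depending only on $m$, $\chi$ and $\Omega$ such that
\[
F_\lambda(t)\ge \delta \into \utau\log \utau - C_1,
\]
and combining with the previous paragraph produces the uniform $L\log L$-estimate $\into \utau\cd\log\utau\cd\le C_2$ for all $t\in(0,\tmax)$ and all $\lambda\in(0,\lambda_0)$.

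To upgrade this to the desired $L^2$-bound I would first use parabolic regularity for $\vtau$: applying the Duhamel representation together with the $L^p$--$L^q$ estimates of Lemma \ref{lem;estif;semi} to the rescaled Neumann heat semigroup $e^{(t/\lambda)(\Delta-1)}$ converts the entropy bound into a $\lambda$-uniform control of $\nabla \vtau$ in some $L^r(\Omega)$ with $r>2$. Testing the $u$-equation with $\utau$ and invoking Young's inequality then gives
\[
\frac{1}{2}\frac{d}{dt}\into \utau^2 + \into|\nabla \utau|^2 \le \tfrac{\chi^2}{2}\into \utau^2|\nabla \vtau|^2,
\]
and the right-hand side is absorbed by a Gagliardo--Nirenberg interpolation against $\into|\nabla \utau|^2$, yielding a differential inequality of the form $y'+y\le C_3$ for $y(t):=\into \utau^2(\cdot,t)$, whence $\sup_{t}\,\into \utau^2\le C$.

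The main obstacle will be ensuring uniformity in $\lambda$ throughout. The Lyapunov step is naturally $\lambda$-monotone thanks to the favourable sign of the additional dissipation $-\chi\lambda\int(\vtau)_t^2$, but the bootstrap to $L^2$ must replace the elliptic identity $u=v-\Delta v$, which is available in the parabolic-elliptic setting, by the parabolic Duhamel formula for $\vtau$; compensating the prefactor $\lambda^{-1}$ by the change of variable $\sigma=(t-s)/\lambda$ is precisely what prevents the estimates on $\nabla\vtau$ from degenerating as $\lambda\searrow 0$, and is the reason all constants can be taken independent of $\lambda\in(0,\lambda_0)$.
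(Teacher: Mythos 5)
Your first two steps (the Lyapunov functional, its dissipation identity, and the Trudinger--Moser argument giving the uniform bounds on $|\wlam(t)|$ and on $\into \utau\cd\log\utau\cd$) coincide with the paper's proof. The gap is in the upgrade to $L^2$. You claim that the entropy bound can be converted, via the Duhamel formula and Lemma \ref{lem;estif;semi}, into a $\lambda$-uniform bound on $\nabla\vtau$ in $L^r(\Omega)$ with $r>2$; this is not achievable. The entropy estimate only gives $\utau$ in $L\log L$ (hence $L^1$) uniformly, and in two dimensions the smoothing estimate for $\nabla e^{\tau(\Delta-1)}$ from $L^q$ to $L^r$ carries the singularity $\tau^{-\frac12-(\frac1q-\frac1r)}$, whose exponent with $q=1$ equals $\frac32-\frac1r\ge 1$ for every $r\ge 2$; the resulting integral in $\sigma=(t-s)/\lambda$ diverges. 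Equivalently, at the elliptic level $-\Delta v+v=u$ with $u\in L\log L$ yields only $\nabla v\in L^2$, never $L^r$ with $r>2$. Consequently the right-hand side $\frac{\chi^2}{2}\into\utau^2|\nabla\vtau|^2$ of your differential inequality cannot be controlled with the information available, and the Young-inequality splitting of the cross term is exactly the step that destroys the usable structure.

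The paper avoids this by never estimating $\nabla\vtau$ pointwise in time. Instead, in \eqref{def;L2;1Step} the cross term is integrated by parts,
\[
\chi\into\utau\nabla\utau\cdot\nabla\vtau=-\frac{\chi}{2}\into\utau^2\Delta\vtau
=-\frac{\chi\lambda}{2}\into\utau^2(\vtau)_t-\frac{\chi}{2}\into\utau^2\vtau+\frac{\chi}{2}\into\utau^3,
\]
using the second equation of \eqref{cp}. The cubic term is absorbed by $\into|\nabla\utau|^2$ through the refined Gagliardo--Nirenberg inequality whose coefficient is the (small, uniformly bounded) $L\log L$ norm; the term involving $(\vtau)_t$ is handled not by a pointwise bound but through the \emph{time-integrated} dissipation estimate $\lambda\int_0^t\lp{2}{(\vtau)_t(\cdot,s)}^2\,ds\le\frac1\chi(|\wlam(0)|+C_1)$ extracted from \eqref{rela;Laypnov}, which enters as a bounded correction $\phi(t)$ to the integrating factor of the resulting ODE. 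Your proposal never records this integrated bound on $(\vtau)_t$, and it is precisely the ingredient that makes the final constant uniform in $\lambda\in(0,\lambda_0)$. To repair your argument, replace the Young-inequality treatment of the chemotaxis term by this integration by parts and carry the dissipation of $(\vtau)_t$ through to the ODE step.
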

\begin{proof}
Let $\lambda_0 >0$ be an arbitrary constant. 
From straightforward calculations we can verify that the function 
\[
  \wlam := 
    \into \left(
      \utau \log \utau 
      - \chi \utau \vtau 
      + \frac \chi 2 (|\nabla \vtau|^2 + \vtau^2)
    \right)
\]
satisfies 
\begin{align}\label{rela;Laypnov}
  \frac {d\wlam}{dt} +\chi \lambda \into |(\vtau)_t|^2 + 
  \into \utau |\nabla \cdot (\log \utau - \chi \vtau)| = 0 
\end{align}
for all $\lambda\in (0,\lambda_0)$. 
Then by virtue of the Jensen inequality and the Trudinger--Moser inequality, 
the same argument as in 
the proof of \cite[Lemma 3.4]{Nagai-Senba-Yoshida} derives that 
there is $C_1>0$ such that 
\begin{align}\label{esti;L1;uv}
  \into \utau\cd \vtau\cd \le C_1 \quad \mbox{and} \quad 
  |\wlam(t)| \le C_1
\quad 
\end{align}
for all $t\in (0,\tmax)$ and $\lambda \in (0,\lambda_0)$ 
under the condition that $\lp{1}{\uinit} < \frac{4\pi}{\chi}$. 
Thanks to \eqref{esti;L1;uv}, the relation \eqref{rela;Laypnov} implies that 
\begin{align}\label{esti;L1;ulogu}
  \into |\utau\cd \log \utau\cd| \le \max\left\{\wlam(0) + C_1, \frac{1}{e} \right\} 
\end{align}
and 
\begin{align}\label{esti;|W|} 
  \lambda \int_0^t \lp{2}{(\vtau)_t (\cdot,s)}^2\,ds \le \frac{1}{\chi}(|\wlam(0)|+C_1) 
\end{align}
for all $t\in (0,\tmax)$ and $\lambda \in (0,\lambda_0)$. 
Now we shall show the $L^2$-boundedness of $\utau$. 
Multiplying the first equation in \eqref{cp} by $\frac{1}{2}\utau$ 
and integrating it over $\Omega$, we infer from integration by parts that  
\begin{align}\label{def;L2;1Step} \notag
  \frac 12 \frac {d}{dt} \into \utau^2   
  &= - \into |\nabla \utau|^2 +\chi \into \utau \nabla \utau \cdot \nabla \vtau 
\\
  &= - \into |\nabla \utau|^2 - \frac{\chi \lambda}{2} \into \utau^2(\vtau)_t 
   + \frac{\chi}{2} \into \utau^3 - \frac{\chi}{2} \into \utau^2 \vtau. 
\end{align}
Let $\ep>0$ be a constant fixed later. 
Since the Gagliardo--Nirenberg inequality and its application 
(see \cite[Lemma 3.5]{Nagai-Senba-Yoshida}) derive 
\[
  \frac{\chi}{2} \into \utau^3 
  \le \ep \lp{2}{\utau}^2\lp{1}{\utau \log \utau} 
      + C_2(\lp{1}{\utau \log \utau}^3 + \lp{1}{\utau}^2)  
\]
and 
\begin{align*}
  - \frac{\chi\lambda}{2} \into \utau^2(\vtau)_t 
  &\le C_3 \lambda \lp{2}{(\vtau)_t}^2 
         (\lp{2}{\nabla \utau} \lp{2}{\utau} + \lp{2}{\utau}^2)
\\
  &\le \ep \lp{2}{\nabla \utau}^2 + 
  \left( 
    C_4 \lambda^2 \lp{2}{(\vtau)_t}^2 +\frac{1}{4} 
  \right) \lp{2}{\utau}^2
\end{align*}
with some $C_2,C_3,C_4>0$, 
the relation \eqref{def;L2;1Step} with the nonnegativity of $\vtau$ 
tells us that  
\begin{align*}
  \frac{1}{2}&\frac{d}{dt} \into \utau^2 + 
  (1-\ep -\ep \lp{1}{\utau \log \utau})\into |\nabla \utau|^2 
\\
  &\le  \left( 
    C_4 \lambda^2 \lp{2}{(\vtau)_t}^2 +\frac{1}{4} 
  \right) \into \utau^2 + C_2(\lp{1}{\utau \log \utau}^3 + \lp{1}{\utau}^2). 
\end{align*}
Noticing from the boundedness of $\lp{1}{\utau \log \utau}$ 
(from \eqref{esti;L1;ulogu}) 
that there is $\ep>0$ 
such that 
\[
  1-\ep -\ep \lp{1}{\utau\cd \log \utau\cd} \ge \frac{1}{2} 
\] 
for all $t\in (0,\tmax)$ and $\lambda\in (0,\lambda_0)$,  
we infer from the application of the Gagliardo--Nirenberg inequality 
\[
  \lp{2}{\utau} \le \lp{2}{\nabla \utau}^2 + C_5 \lp{1}{\utau}^2 
\]
with some $C_5>0$ that 
\begin{align}\label{def;L2;2Step}  \notag
  \frac{1}{2}\frac{d}{dt} \into \utau^2 + 
  \frac{1}{2} \left( 
    \frac{1}{2}-C_4 \lambda^2 \lp{2}{(\vtau)_t}^2 
  \right)  
  \into \utau^2 
  &\le   
  C_6(\lp{1}{\utau \log \utau}^3 + \lp{1}{\utau}^2)
  \\
  &\le \frac{1}{2}L 
\end{align}
with some $C_6>0$ and $L>0$. 
Now we put 
\[
  y(t) := \into \utau^2(\cdot,t) 
\quad \mbox{and} \quad 
  \phi (t) := \frac 12 t - \frac{C_4\lambda^2}{2} \int_0^t \lp{2}{(\vtau)_t (\cdot,s)}^2\, ds.  
\]
Then from the differential inequality \eqref{def;L2;2Step}, 
we establish that 
\begin{align*}
  y(t) \le y(0) e^{-\phi(t)} + L e^{-\phi(t)} \int_0^t e^{\phi(s)}\,ds 
  \quad \mbox{for all} \ t\in (0,\tmax) \ \mbox{and} \ \lambda \in (0,\lambda_0).  
\end{align*} 
Thus the boundedness of $\phi(t)$ 
\[
  \frac{1}{2} t - \frac{C_4 \lambda_0}{ 2\chi} (|\wlam(0)| + C_1) \le 
  \phi(t) \le \frac{1}{2} t 
  \quad (t\in (0,\tmax), \ \lambda\in (0,\lambda_0) )
\]
(from \eqref{esti;|W|}) entails that there is $C_7=C_7(\lambda_0)>0$ such that 
\[
    y(t) \le y(0) e^{-\phi(t)} + L e^{-\phi(t)} \int_0^t e^{\phi(s)}\,ds \le C_7   
\] 
for all $t>0$ and $\lambda\in (0,\lambda_0)$, 
which means the end of the proof. 
\end{proof}

%
%
%
%

Thanks to Lemma \ref{lem;Lpesti;2D}, we attain 
global existence and uniform-in-$\lambda\in (0,\lambda_0)$ boundedness of 
the solution $(\utau,\vtau)$ to the $2$-dimensional Keller--Segel system.  

\begin{lem}\label{lem;boundedness;2D}
Assume that $\uinit$ satisfies 
\[
  \lp{1}{\uinit} < \frac{4\pi}{\chi}. 
\]
Then $\tmax =\infty$ holds, and 
for all $\lambda_0>0$ there exists $C>0$ such that 
  \[  
    \lp{\infty}{\utau\cd} \le C
  \]
for all $t\in (0,\infty)$ and all $\lambda\in (0,\lambda_0)$. 
\end{lem}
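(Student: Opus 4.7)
The plan is to proceed in exact parallel with the argument for Lemma \ref{lem;boundedness;HD}, since all the heavy lifting has already been done in Lemma \ref{lem;Lpesti;2D}. The key observation is that in the two-dimensional setting we have $n=2$, so the threshold exponent $\frac{n}{2}=1$, and consequently the $L^2$-bound produced by Lemma \ref{lem;Lpesti;2D} sits above this threshold with room to spare (since $2>1=\frac{n}{2}$).

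Concretely, I would fix an arbitrary $\lambda_0>0$ and apply Lemma \ref{lem;Lpesti;2D} to obtain a constant $C_1>0$, independent of $\lambda\in(0,\lambda_0)$, such that $\lp{2}{\utau\cd}\le C_1$ for all $t\in(0,\tmax)$ and all $\lambda\in(0,\lambda_0)$. Since $p:=2>\frac{n}{2}=1$, this bound is precisely of the form required to invoke Lemma \ref{lem;estimateforufromp} with $M:=C_1$. The final clause of Lemma \ref{lem;estimateforufromp} (that $C$ remains independent of $\lambda$ whenever $M$ is) then yields a constant $C_2>0$, independent of $\lambda\in(0,\lambda_0)$, with
\[
  \lp{\infty}{\utau\cd} + \wmp{1,q}{\vtau\cd} \le C_2
  \quad\text{for all } t\in(0,\tmax) \text{ and all } \lambda\in(0,\lambda_0).
\]

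Finally, I would invoke the extensibility criterion from Lemma \ref{lem;localexistence}: since $\lp{\infty}{\utau\cd}+\wmp{1,q}{\vtau\cd}$ stays bounded on $(0,\tmax)$, the alternative $\limsup_{t\to\tmax}(\lp{\infty}{\utau\cd}+\wmp{1,q}{\vtau\cd})=\infty$ cannot occur, forcing $\tmax=\infty$. With $\tmax=\infty$ established, the uniform bound $\lp{\infty}{\utau\cd}\le C_2$ extends to all $t\in(0,\infty)$ and all $\lambda\in(0,\lambda_0)$, which is the desired conclusion.

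There is no real obstacle here — the proof is a one-line combination, essentially identical to that of Lemma \ref{lem;boundedness;HD}. The only conceptual point worth emphasizing is that the dimensional coincidence $2>\frac{n}{2}$ (true exactly because $n=2$) is what allows the $L^2$-bound coming from the Lyapunov-functional/Trudinger--Moser machinery of Lemma \ref{lem;Lpesti;2D} to directly feed Lemma \ref{lem;estimateforufromp}, bypassing any further bootstrap.
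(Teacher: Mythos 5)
Your proposal is correct and follows exactly the paper's own (one-line) proof: combine Lemma \ref{lem;Lpesti;2D} with Lemma \ref{lem;estimateforufromp} (noting $2>\frac{n}{2}=1$) and then apply the extensibility criterion from Lemma \ref{lem;localexistence}. Nothing is missing.
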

\begin{proof}
A combination of Lemmas \ref{lem;estimateforufromp} 
and \ref{lem;Lpesti;2D}, along with the extensibility criterion  
leads to this lemma. 
\end{proof}

%
%

\section{Convergence}

In this section we will show that solutions of \eqref{cp} 
converge to those of \eqref{cpp-e}. 
Here we assume that 
there exists a unique global classical solution 
$(\utau,\vtau)$ of \eqref{cp} such that for all $\lambda_0 > 0$ 
there is $C>0$ independent of $\lambda \in (0,\lambda_0)$ such that, 
\[
  \lp{\infty}{\utau\cd} + \wmp{1,q}{\vtau\cd} \le C
\]
for all $t > 0$ and all $\lambda \in (0,\lambda_0)$, 
which is established by Lemmas \ref{lem;boundedness;HD} and 
\ref{lem;boundedness;2D}. 
Arguments in this section are based on those in 
the proof of 
\cite[Theorem 1.1]{Wang-Winkler-Xiang}; 
thus I shall only show brief proofs.   
%
%
%
%
We first confirm the following lemma which is a cornerstone of this work. 
\begin{lem}\label{lem;boundedinholder}
For all sequences of numbers $\{\lambdan\}_{n\in\mathbb{N}}\subset (0,\lambda_0)$ 
satisfying $\lambdan \searrow 0$ as $n\to\infty$ 
there exist a subsequence $\lambdanj \searrow 0$ and functions 
\begin{align*}
  u\in C(\obar\times [0,\infty))\cap 
  C^{2,1}(\obar\times (0,\infty)) 
\ \mbox{and} \ 
  v \in C^{2,0}(\obar\times (0,\infty))\cap 
  L^\infty(0,\infty ;W^{1,q}(\Omega)) 
\end{align*}
such that  
\begin{align*}
 &\utaunj \to u \quad 
 \mbox{in}\ C_{\rm loc}(\obar\times [0,\infty)), 
\\
 &\vtaunj \to v \quad \mbox{in}\ 
 C_{\rm loc}(\obar\times (0,\infty)) \cap L^2_{\rm loc}((0,\infty);W^{1,2}(\Omega))
\end{align*}
as $j\to \infty$. 
Moreover, $(u,v)$ solves \eqref{cpp-e} 
classically. 
\end{lem}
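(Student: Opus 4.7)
The plan is to combine uniform parabolic Hölder regularity for $\utau$ with a variation-of-constants analysis of $\vtau$, mirroring the compactness strategy used in \cite{Wang-Winkler-Xiang}. As a first step, I would exploit the divergence form $(\utau)_t = \nabla \cdot (\nabla \utau - \chi \utau \nabla \vtau)$ together with the uniform-in-$\lambda$ bounds $\lp{\infty}{\utau\cd} + \wmp{1,q}{\vtau\cd}\le C$ to invoke standard Hölder regularity results for parabolic equations with $L^q$-coefficients (e.g.\ Porzio--Vespri or Ladyzhenskaya--Solonnikov--Ural'tseva), and obtain, for every $T>0$, an exponent $\beta\in(0,1)$ and a constant $C_T$ independent of $\lambda\in(0,\lambda_0)$ with $\utau$ uniformly bounded in $C^{\beta,\beta/2}(\obar\times[0,T])$. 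A diagonal Arzelà--Ascoli procedure then extracts, from any prescribed sequence $\{\lambdan\}\searrow 0$, a subsequence $\{\lambdanj\}$ and a limit $u\in C(\obar\times[0,\infty))$ such that $\utaunj\to u$ in $C_{\rm loc}(\obar\times[0,\infty))$; parabolic Schauder bootstrap, based on the Hölder continuity of $\utau$ and of $\nabla\vtau$, simultaneously yields $u\in C^{2,1}(\obar\times(0,\infty))$.

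Next, the natural candidate for $v(\cdot,t)$ is the unique Neumann solution of $-\Delta v(\cdot,t)+v(\cdot,t)=u(\cdot,t)$ in $\Omega$. To identify it as the limit of $\vtau$, I rewrite the mild formulation via the substitution $\sigma=(t-s)/\lambda$ as
\[
  \vtau\cd = e^{\frac{t}{\lambda}(\Delta-1)}\vinit + \int_{0}^{t/\lambda}e^{\sigma(\Delta-1)}\utau(\cdot,t-\lambda\sigma)\,d\sigma.
\]
For fixed $t>0$ the first summand decays like $e^{-\alpha t/\lambda}$ by Lemma \ref{lem;estif;semi}(i), while in the integral the uniform convergence $\utaunj(\cdot,t-\lambdanj\sigma)\to u(\cdot,t)$ on compact ranges of $\sigma$, combined with the exponential operator bound $\lp{\infty}{e^{\sigma(\Delta-1)}\varphi}\le Ce^{-\sigma}\lp{\infty}{\varphi}$ that ensures the required dominating function, lets Lebesgue's theorem force $\vtaunj(\cdot,t)\to \int_0^\infty e^{\sigma(\Delta-1)}u(\cdot,t)\,d\sigma=v(\cdot,t)$ in $L^\infty(\Omega)$ for every $t>0$. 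Elliptic Schauder theory applied to $-\Delta v+v=u$ upgrades $v$ to $C^{2,0}(\obar\times(0,\infty))\cap L^\infty(0,\infty;W^{1,q}(\Omega))$, and a further Arzelà--Ascoli argument exploiting the uniform spatial Hölder control inherited from the $W^{1,q}$-bound promotes the pointwise limit to convergence in $C_{\rm loc}(\obar\times(0,\infty))$. The additional $L^2_{\rm loc}((0,\infty);W^{1,2}(\Omega))$ convergence is then extracted by testing the elliptic identity $-\Delta(\vtau-v)+(\vtau-v)=(\utau-u)-\lambda(\vtau)_t$ against $\vtau-v$ on cylinders $\Omega\times(\tau,T)$: the first right-hand term is controlled by the already established uniform convergence, and the contribution of $\lambda(\vtau)_t$ is absorbed by an energy identity obtained from multiplying the $\vtau$-equation by $(\vtau)_t$ (or, in the two-dimensional case, directly from the Lyapunov bound exploited in the proof of Lemma \ref{lem;Lpesti;2D}).

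Finally, having secured enough convergence of $(\utaunj,\vtaunj)$, I can pass to the limit in the weak formulation of \eqref{cp}, using that $u\in C^{2,1}$ and $v\in C^{2,0}$ in $x$, to conclude that $(u,v)$ satisfies \eqref{cpp-e} in the classical sense and with the stated regularity. The main anticipated obstacle is precisely the behaviour of $\vtau$ near $t=0$: since $\vinit$ need not coincide with the elliptic projection of $\uinit$, an initial layer of time-thickness $O(\lambda)$ is unavoidable, and for this reason the convergence of $\vtau$ is claimed only on $C_{\rm loc}(\obar\times(0,\infty))$, so that every estimate involving $\vtau$ has to be carried out on cylinders bounded away from $t=0$.
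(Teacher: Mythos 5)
Your overall architecture coincides with the paper's: the paper likewise derives uniform-in-$\lambda$ H\"older bounds for $\utau$ from the standing hypothesis $\lp{\infty}{\utau\cd}+\wmp{1,q}{\vtau\cd}\le C$ via Porzio--Vespri, extracts a subsequence by Arzel\`a--Ascoli together with a weak-$\ast$ limit of $\vtau$ in $L^\infty(0,\infty;W^{1,q}(\Omega))$, and then delegates the identification of the limit problem and the upgrade of the convergence of $\vtau$ to the arguments of \cite{Wang-Winkler-Xiang}. Your treatment of $\vtau$ via the rescaled Duhamel formula and dominated convergence is a legitimate, self-contained way to carry out that last step, and your closing remark about the initial layer of thickness $O(\lambda)$ correctly explains why convergence of $\vtau$ is claimed only on $\obar\times(0,\infty)$.

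The one step that does not work as written is the control of $\lambda\int_\tau^T\into(\vtau)_t(\vtau-v)$ in the $L^2_{\rm loc}((0,\infty);W^{1,2}(\Omega))$ argument when $n\ge 3$. Multiplying the second equation by $(\vtau)_t$ and applying Young's inequality to $\into\utau(\vtau)_t$ yields only $\lambda\int_\tau^T\lp{2}{(\vtau)_t(\cdot,s)}^2\,ds\le C(1+\lambda^{-1})$, which is not uniform in $\lambda$; the clean bound of the form \eqref{esti;|W|} comes from the Lyapunov functional and is available only in the two-dimensional case, as your own parenthetical hedge suggests. The standard repair is to avoid $(\vtau)_t$ altogether by integrating by parts in time, writing $\lambda\int_\tau^T\into(\vtau)_t(\vtau-v)=\frac{\lambda}{2}\bigl[\into\vtau^2\bigr]_\tau^T-\lambda\bigl[\into\vtau v\bigr]_\tau^T+\lambda\int_\tau^T\into\vtau v_t$, where each term is $O(\lambda)$ thanks to the uniform $L^\infty(0,\infty;W^{1,2}(\Omega))$ bound on $\vtau$ and the smoothness of $v$ (hence of $v_t=(1-\Delta)^{-1}u_t$) on $\obar\times[\tau,T]$. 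With that replacement your proof goes through and is, if anything, more explicit than the paper's, which leaves these details to the cited reference.
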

\begin{remark}\label{remark1}
This lemma also gives that 
global existence and boundedness in \eqref{cpp-e} hold 
under the condition that there is a unique global bounded solution in \eqref{cp} 
which is bounded uniformly in $\lambda\in (0,\lambda_0)$.  
\end{remark}
\begin{proof}
From the assumption in this section 
and the standard parabolic regularity argument \cite[Theorem 1.3]{Porzio-Vespri}
we see that $\{\utau\}_{\itau}$ is bounded in 
$C^{\alpha,\frac{\alpha}{2}}_{\rm loc}(\ol{\Omega}\times[0,\infty))$ 
with some $\alpha\in (0,1)$. 
Thus the Arzel\`{a}--Ascoli theorem 
and the boundedness of $\|\nabla \vtau\|_{L^\infty(0,\infty;W^{1,q}(\Omega))}$ yields that we can find a subsequence 
$\lambdanj \searrow 0$ and functions 
\[
  u\in C^{\alpha,\frac{\alpha}{2}}_{\rm loc}
    (\ol{\Omega}\times [0,\infty)) 
\quad \mbox{and} \quad 
  v\in L^\infty (0,\infty;W^{1,q}(\Omega)) 
\]
satisfying 
\begin{align*}
 \utaunj \to u \quad 
 \mbox{in}\ C_{\rm loc}(\obar\times [0,\infty)) 
\quad \mbox{and} \quad 
 \vtaunj \overset{\ast}{\rightharpoonup} v \quad \mbox{in}\ 
 L^\infty (0,\infty;W^{1,q}(\Omega)) 
\end{align*}
as $j\to \infty$. 
Then arguments similar to those in 
the proof of \cite[Theorem 1.1]{Wang-Winkler-Xiang} 
enable us to attain this lemma.  
\end{proof}

%
%
%
%
We next verify the following lemma which implies that 
the pair of functions $(u,v)$ provided 
by Lemma \ref{lem;boundedinholder} is independent of a choice of 
a sequence $\lambda_n\searrow 0$. 

\begin{lem}\label{uniqueness}
A solution $(\uobar,\vobar)$ of \eqref{cpp-e} satisfying 
\[
  \uobar\in C(\obar\times [0,\infty))\cap 
  C^{2,1}(\obar\times (0,\infty)) 
\ \mbox{and} \ 
  \vobar \in C^{2,0}(\obar\times (0,\infty))\cap 
  L^\infty(0,\infty; W^{1,q}(\Omega)) 
\]
is unique. 
\end{lem}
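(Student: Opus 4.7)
My plan is to run a standard contraction/energy argument on the difference of two supposed solutions, relying crucially on the regularizing effect of the elliptic equation to reabsorb the cross-diffusion term.

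Suppose $(u_1,v_1)$ and $(u_2,v_2)$ are two solutions in the class stated, both with initial data $\uinit$. Set $U:=u_1-u_2$ and $V:=v_1-v_2$. Subtracting the equations in \eqref{cpp-e}, the pair $(U,V)$ solves the linear inhomogeneous system
\begin{equation*}
U_t = \Delta U - \chi\nabla\cdot\bigl(u_1\nabla V + U\nabla v_2\bigr),
\qquad
-\Delta V + V = U,
\end{equation*}
with homogeneous Neumann boundary conditions. Note that although $v_i$ is not assumed continuous at $t=0$, each $v_i(\cdot,t)$ is determined by $u_i(\cdot,t)$ via the elliptic equation; since $u_1(\cdot,0)=u_2(\cdot,0)=\uinit$, uniqueness for the elliptic Neumann problem gives $V(\cdot,t)\to 0$ as $t\searrow 0$ in, say, $W^{1,q}(\Omega)$, and of course $U(\cdot,0)=0$.

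The strategy is to fix an arbitrary $T>0$, use the assumed bounds to get
\begin{equation*}
K_T := \sup_{t\in[0,T]}\bigl(\lp{\infty}{u_1\cd}+\lp{\infty}{\nabla v_2\cd}\bigr) < \infty,
\end{equation*}
and then run a Gr\"onwall argument on $y(t):=\tfrac12\into U^2\cd$. Here the $L^\infty$-bound on $u_1$ is immediate from the regularity class, while the $L^\infty$-bound on $\nabla v_2$ follows by testing $-\Delta v_2 + v_2 = u_2$ against suitable elliptic regularity (since $u_2\in L^\infty$, standard $W^{2,p}$-theory and Sobolev embedding give $\nabla v_2 \in L^\infty$, with a bound depending only on $\lp{\infty}{u_2}$ and $\Omega$). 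Testing the $U$-equation by $U$ and integrating by parts yields
\begin{equation*}
\frac{d}{dt}y(t) + \into|\nabla U|^2
= \chi\into u_1\,\nabla V\cdot\nabla U + \chi\into U\,\nabla v_2\cdot\nabla U,
\end{equation*}
and Young's inequality absorbs the full $\into|\nabla U|^2$ into the left-hand side at the cost of a term $C(K_T,\chi)\bigl(\into U^2 + \into|\nabla V|^2\bigr)$.

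The key bookkeeping step is to convert $\into|\nabla V|^2$ back to $\into U^2$. Testing $-\Delta V + V = U$ by $V$ and applying Young's inequality gives
\begin{equation*}
\into|\nabla V|^2 + \tfrac12\into V^2 \le \tfrac12\into U^2,
\end{equation*}
so the preceding estimate collapses to $y'(t)\le C(K_T,\chi)\,y(t)$ on $[0,T]$. Since $y(0)=0$, Gr\"onwall yields $U\equiv 0$ on $\Omega\times[0,T]$, hence $V\equiv 0$ by the elliptic equation, and $T>0$ being arbitrary completes the proof.

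The only real obstacle is the one I just addressed: the cross term $\chi\into u_1\,\nabla V\cdot\nabla U$ is formally quadratic in $(U,V)$ but involves $\nabla V$, which a priori is not controlled by $U$ in $L^2$; this is precisely what the coercivity $\|\nabla V\|_2^2\lesssim \|U\|_2^2$ coming from the elliptic equation remedies. No finer information about $\uinit$, no further smallness, and no compactness is needed.
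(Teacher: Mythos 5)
Your argument is correct and is essentially the paper's own proof: the paper reduces uniqueness to a Gronwall-type $L^2$ estimate on the difference $y=\uobar_1-\uobar_2$ (citing the analogous Lemma 2.1 of Stinner--Tello--Winkler), which is exactly the energy/absorption scheme you carry out, including the key step of controlling $\|\nabla V\|_{L^2(\Omega)}$ by $\|U\|_{L^2(\Omega)}$ through the elliptic equation. No discrepancies to report.
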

\begin{proof}
Let $({\uobar}_1,{\vobar}_1)$ and $({\uobar}_2,{\vobar}_2)$ be solutions to \eqref{cpp-e}  
and put $y(x,t):= \uobar_1(x,t)-\uobar_2(x,t)$ for $(x,t)\in \Omega \times (0,\infty)$.  
Then aided by the Gronwall-type argument similar to that in 
the proof of \cite[Lemma 2.1]{stinner_tello_winkler},  
we infer that $y(x,t)=0$, which concludes the proof. 
\end{proof}

%
%
%
%
%
%
Finally we shall establish convergence of 
the solution $(\utau,\vtau)$ for \eqref{cp} as $\lambda\searrow 0$. 
\begin{lem}\label{lem;convergence;final}
The solution $(\utau,\vtau)$ of \eqref{cp} with $\lambda\in (0,\lambda_0)$ 
satisfies that  
\begin{align*}
 &\utau \to u \quad 
 \mbox{in}\ C_{\rm loc}(\obar\times [0,\infty)), 
\\
 &\vtau \to v \quad \mbox{in}\ 
 C_{\rm loc}(\obar\times (0,\infty)) \cap L^2_{\rm loc}((0,\infty);W^{1,2}(\Omega)) 
\end{align*}
as $\lambda\searrow 0$, where $(u,v)$ is the solution of \eqref{cpp-e} 
provided by Lemma \ref{lem;boundedinholder}. 
\end{lem}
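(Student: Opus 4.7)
The plan is to deduce full convergence of the family $(\utau,\vtau)$ as $\lambda\searrow 0$ from the subsequential convergence provided by Lemma~\ref{lem;boundedinholder} together with the uniqueness statement of Lemma~\ref{uniqueness}. This is a standard compactness-plus-uniqueness argument.

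First I argue by contradiction. Suppose that $\utau\to u$ in $C_{\rm loc}(\obar\times [0,\infty))$ fails. Then there exist a compact set $K\subset \obar\times [0,\infty)$, a number $\delta>0$, and a sequence $\lambdan\searrow 0$ with $\lambdan\in (0,\lambdaz)$ such that
\[
  \sup_{(x,t)\in K}\bigl|u_{\lambdan}(x,t) - u(x,t)\bigr| \ge \delta
  \quad\text{for every } n\in\mathbb{N}.
\]
Applying Lemma~\ref{lem;boundedinholder} to this particular sequence $\{\lambdan\}$, I extract a further subsequence $\lambdanj\searrow 0$ and a pair $(\ubar,\vbar)$ with the regularity
\[
  \ubar\in C(\obar\times [0,\infty))\cap C^{2,1}(\obar\times (0,\infty)),
  \qquad
  \vbar\in C^{2,0}(\obar\times (0,\infty))\cap L^\infty(0,\infty;W^{1,q}(\Omega))
\]
such that $\utaunj\to \ubar$ in $C_{\rm loc}(\obar\times [0,\infty))$ and $\vtaunj\to \vbar$ in the topologies listed there, and such that $(\ubar,\vbar)$ solves \eqref{cpp-e} classically. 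Because Lemma~\ref{uniqueness} asserts uniqueness precisely in this regularity class, one necessarily has $(\ubar,\vbar)=(u,v)$. In particular $\utaunj\to u$ uniformly on $K$, contradicting the separation by $\delta$. Hence $\utau\to u$ in $C_{\rm loc}(\obar\times [0,\infty))$.

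The very same contradiction scheme, repeated with respect to the topology of $C_{\rm loc}(\obar\times (0,\infty))$ and of $L^2_{\rm loc}((0,\infty);W^{1,2}(\Omega))$, respectively, yields $\vtau\to v$ in both senses: any putative failure of convergence produces a sequence $\lambdan\searrow 0$ along which $\vtau$ remains outside a fixed neighbourhood of $v$; Lemma~\ref{lem;boundedinholder} then supplies a further subsequence $\lambdanj$ converging to some $(\ubar,\vbar)$ with the regularity required by Lemma~\ref{uniqueness}, and uniqueness identifies $(\ubar,\vbar)$ with $(u,v)$, contradicting the initial separation.

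The only point that requires care — and is essentially the entire content of the lemma — is checking that the limit $(\ubar,\vbar)$ produced by Lemma~\ref{lem;boundedinholder} enjoys exactly the regularity demanded in Lemma~\ref{uniqueness}, so that the uniqueness statement is applicable to any accumulation point of the family. Since Lemma~\ref{lem;boundedinholder} is formulated to deliver precisely this regularity, no additional estimate is needed and the present lemma reduces to a routine invocation of the two preceding lemmas in the right order; no obstacle beyond this bookkeeping is anticipated.
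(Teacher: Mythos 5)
Your proposal is correct and follows essentially the same route as the paper: the paper's (very terse) proof likewise combines the subsequential compactness of Lemma \ref{lem;boundedinholder} with the uniqueness of Lemma \ref{uniqueness} to upgrade subsequential convergence to convergence of the whole family, which is exactly the sub-subsequence/contradiction argument you spell out. No gaps.
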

\begin{proof}
Lemmas \ref{lem;boundedinholder} and \ref{uniqueness} yield that 
there exists the pair of the functions $(u,v)$ such that 
for any sequences $\{\lambda_n\}_{n\in\mathbb{N}}\subset (0,\lambda_0)$ 
satisfying $\lambda_n\searrow 0$ as $n\to \infty$ 
there is a subsequence $\lambda_{n_j} \searrow 0$ 
such that  
\begin{align*}
 &\utaunj \to u \quad 
 \mbox{in}\ C_{\rm loc}(\obar\times [0,\infty)), 
\\
 &\vtaunj \to v \quad \mbox{in}\ 
 C_{\rm loc}(\obar\times (0,\infty)) \cap L^2_{\rm loc}((0,\infty);W^{1,2}(\Omega))
\end{align*}
as $j\to \infty$, which enables us to see this lemma. 
\end{proof}

\begin{proof}[{\rm \bf Proof of Theorem \ref{mainth}}]
Lemmas \ref{lem;boundedness;HD} and \ref{lem;convergence;final} 
directly show Theorem \ref{mainth}. 
\end{proof}

\begin{proof}[{\rm \bf Proof of Corollary \ref{maincoro1}}]
Put $\ep_1=\ep_1(p,\chi,|\Omega|):= \sup_{q\in (n,\infty))}\ep_0(p,q,\chi,|\Omega|)$ and 
let $\uinit$ satisfy $\lp{p}{\uinit} < \ep_1$.  
Then we can pick $q > n$ such that 
$\lp{p}{\uinit} < \ep_0(p,q,\chi,|\Omega|)$. 
Now we choose $\vinit \in W^{1,q} (\Omega)$ satisfying 
$\lp{q}{\nabla \vinit} < \ep_0$. 
By virtue of Theorem \ref{mainth}, 
we can prove Corollary \ref{maincoro1}. 
\end{proof}

\begin{proof}[{\rm \bf Proof of Theorem \ref{mainth2}}]
From Lemmas \ref{lem;boundedness;2D} and \ref{lem;convergence;final} 
we can see Thereom \ref{mainth2}. 
\end{proof}

\begin{proof}[{\rm \bf Proof of Corollary \ref{maincoro2}}]
Theorem \ref{mainth2} directly leads to Corollary \ref{maincoro2}. 
\end{proof}

%
\section*{Acknowledgments}
The author would like to thank Professor Michael Winkler 
for pointing out mistakes in 
the previous arguments in Section 4, 
and moreover, 
appreciated his modifying arguments in his joint paper 
with Professors Yulan Wang and Zhaoyin Xiang 
(\cite{Wang-Winkler-Xiang}). 



\end{document}